\newtheorem{theorem}{Theorem}
\newtheorem{lemma}{Lemma}
\newtheorem{remark}{Remark}
\newtheorem{example}{Example}
\newtheorem{assumption}{Assumption}
\numberwithin{equation}{section}
\numberwithin{theorem}{section}
\numberwithin{lemma}{section}
\numberwithin{example}{section}
\numberwithin{definition}{section}
\title{Crank-Nicolson schemes for sub-diffusion equations with nonsingular and singular source terms in time}
\author{Han Zhou\thanks{Institute of Mathematics, Hebei University of Technology, Tianjin 300401, China. This author was partially supported by the National Natural Science Foundation of China (No. 11901151). Email: zhouhan@hebut.edu.cn}
        \and
        Wenyi Tian\thanks{Corresponding author. Center for Applied Mathematics, Tianjin University, Tianjin 300072, China. This author was partially supported by the National Natural Science Foundation of China (No. 12071343). Email: twymath@gmail.com}
}
\date{}
\begin{document}
\maketitle
\begin{abstract}
  In this work, two Crank-Nicolson schemes without corrections are developed for sub-diffusion equations. First, we propose a Crank-Nicolson scheme without correction for problems with regularity assumptions only on the source term. Second, since the existing Crank-Nicolson schemes have a severe reduction of convergence order for solving sub-diffusion equations with singular source terms in time, we then extend our scheme and propose a new Crank-Nicolson scheme for problems with singular source terms in time. Second-order error estimates for both the two Crank-Nicolson schemes are rigorously established by a Laplace transform technique, which are numerically verified by some numerical examples.

  {\bf Keywords:} Sub-diffusion equation, singular source term, Crank-Nicolson scheme, Laplace transform, linear finite element

  {\bf AMS subject classifications:} 65M06, 65M60, 65M15, 35R11, 35R05
\end{abstract}

\section{Introduction}

We consider the following sub-diffusion equation with a singular source term in time and nonsmooth initial data,
\begin{equation}\label{eq:tfdesub}
  \left\{
  \begin{aligned}
    &{^{C\!}}D^{\alpha}_tu(x,t)+A u(x,t)=f(x,t), ~~&& (x,t)\in \Omega\times (0, T],\\
    &u(x,t)=0,~~&&(x,t)\in\partial\Omega\times (0, T],\\
    &u(x,0)=u^{0}(x),~~&&x\in \Omega.
  \end{aligned}\right.
\end{equation}
The operator $A$ denotes a self-adjoint positive definite second-order elliptic partial
differential operator in a bounded domain $\Omega\subset\mathbb{R}^d$ with boundary $\partial\Omega$, $d=1,2$, and the initial value $u^{0}(x)$ belongs to $L^{2}(\Omega)$.
The notation ${^{C\!}}D^{\alpha}_tu(x,t)$ with $0<\alpha<1$ is defined by
\begin{equation*}
  {^{C\!}}D^{\alpha}_tu(x,t)=\frac{1}{\Gamma(1-\alpha)}\int_0^t(t-\zeta)^{-\alpha}u'(x,\zeta)\mathrm{d}\zeta
\end{equation*}
refers to the $\alpha$-th order left Caputo derivative of function $u(x,t)$ with respect to variable $t$, where $\Gamma(\cdot)$ denotes the Gamma function given by $\displaystyle\Gamma(s)=\int_0^{\infty}t^{s-1}e^{-t}\mathrm{d}t$ for $s$ with real part $\Re(s)>0$.

The sub-diffusion equation \eqref{eq:tfdesub} was formulated in \cite{SchneiderW:1989} and then widely used to simulate anomalous diffusion phenomena in physics recently \cite{MetzlerK:2000}, where the the mean squared displacement of particle motion grows by sublinear rate in time.
Compared to the normal diffusion equations, the solutions of sub-diffusion equation \eqref{eq:tfdesub} and some other time-fractional evolution problems usually exhibit weakly singular property near the origin even if the given data are sufficiently smooth with respect to time \cite{BrunnerH:1986,MillerF:1971,SakamotoY:2011,StynesOG:2017}.

To solve this type of problems numerically, some efficient finite difference methods were developed, such as piecewise polynomial interpolation \cite{GaoSZ:2014,JinLZ:2016a,LinX:2007,StynesOG:2017,YanKF:2018,ZhangSW:2011} and convolution quadrature (CQ) \cite{CuestaLP:2006,JinLZ:2017,JinLZ:2018a,LubichST:1996,WangWY:2021,WangYYP:2020,ZengLLT:2015,ZhouT:2022}. Among the different discretization schemes in the literature, the CQ technique proposed in the pioneering work \cite{Lubich:1986b} by Lubich is flexible for designing high-order numerical schemes for approximating time-fractional evolution problems \cite{JinLZ:2017}. Due to the weakly singular property of the solutions near the origin, the direct application of CQ will lead to an order reduction to only first order in time, while the optimal convergence order can be preserved by a correction approach in \cite{CuestaLP:2006,JinLZ:2017}. This idea also was utilized in designing Crank-Nicolson CQ schemes for the sub-diffusion problem \eqref{eq:tfdesub} in \cite{JinLZ:2016a,WangWY:2021} to preserve the optimal second-order convergence rate. The Crank-Nicolson CQ scheme developed in \cite{JinLZ:2016a} needs corrections at two starting time steps. Furthermore, \cite{WangWY:2021} designed another Crank-Nicolson CQ scheme with only single-step initial correction.

For the source term in the sub-diffusion problem \eqref{eq:tfdesub} owning sufficient regularity in time, the schemes based on CQ can achieve optimal convergence order by some proper corrections. However, it was mentioned in \cite{ZhouT:2022} that the correction approach in the literature is not applicable to problem \eqref{eq:tfdesub} with source terms being singular at $t=0$ since $f(0)$ tends to infinity, such as $f(x,t)=t^\mu g(x)$ with $-1<\mu<0$. Thus the existing time-stepping schemes, including the Crank-Nicolson CQ schemes in \cite{JinLZ:2016a,WangWY:2021}, lost their optimal accuracy and have severe reduction of convergence order far below one, and such performance was observed in the numerical results by the BDF1- and BDF2-CQs in \cite{ZhouT:2022} and numerical examples in this paper.
Overall, singular source terms in problem \eqref{eq:tfdesub} bring new difficulties both in designing efficient time-stepping schemes and analyzing error bounds.
In \cite{ZhouT:2022}, two new time-stepping schemes based on BDF1 and BDF2 were proposed for \eqref{eq:tfdesub} with a class of source terms mildly singular in time, which can restore the optimal first and second convergence order, respectively, even for singular source term $f(x,t)=t^{\mu}g(x)$ with $-1<\mu<0$. Additionally, \cite{ZhouT:2022} also proposed a new analysis technique based on the Laplace transform instead of generating function of the source term  to establish error estimates of the proposed schemes. In \cite{ChenSZ:2022x}, the error estimates of the schemes based on BDF2 were analyzed by the discrete Laplace transform technique for \eqref{eq:tfdesub} with some singular source terms.

In this work, we concentrate on designing novel Crank-Nicolson schemes for the sub-diffusion problem \eqref{eq:tfdesub} with both nonsingular and singular source terms and analyzing their error estimates by developing the Laplace transform technique mentioned in \cite{ZhouT:2022}. As mentioned above, the existing Crank-Nicolson schemes \cite{JinLZ:2016a,WangWY:2021} both require corrections at starting time steps. Then our first objective is to design a novel Crank-Nicolson scheme without corrections for the sub-diffusion problem \eqref{eq:tfdesub}, which can also keep the optimal second-order convergence rate for source terms with low regularity. The second objective of this work is to develop a second-order Crank-Nicolson scheme for the problem \eqref{eq:tfdesub} with singular source terms, such as $f(x,t)=t^{\mu}g(x,t)$ with $-1<\mu<0$.

The rest of this paper is organized as follows. In Section \ref{sec:2}, we present some preliminary results on the sub-diffusion problem \eqref{eq:tfdesub} with singular source terms with respect to time.
In Section \ref{sec:3}, a novel fractional Crank-Nicolson scheme \eqref{eq:CN} without correction is proposed for the sub-diffusion problem \eqref{eq:tfdesub}, and the second-order error estimates are analyzed for nonsingular source terms. In Section \ref{sec:4}, we further design another fractional Crank-Nicolson scheme \eqref{eq:nCN} for singular source terms. The optimal second-order convergence rate is also rigorously established. In Section \ref{sec:5}, some numerical results are illustrated to show the effectiveness of the proposed Crank-Nicolson schemes and verify the theoretical convergence results.

\section{Preliminary} \label{sec:2}
The well-posedness and regularity of problem \eqref{eq:tfdesub} have been well established in \cite{Bajlekova:2001,SakamotoY:2011} for $f(x,t)\in L^{p}(0,T;L^{2}(\Omega))$ with $p>1$.
For the case that $f(x,t)$ belongs to the space $L^1(0,T;L^{2}(\Omega))$ and owns lower regularity at $t=0$ , the existence, uniqueness and regularity of the solution of \eqref{eq:tfdesub} were discussed in \cite{ZhouT:2022} as well.
The result is stated in the following theorem. For convenience of notation, $(\cdot,\cdot)$ denotes the inner product in $L^{2}(\Omega)$, and $\|\cdot\|$ denotes the corresponding norm throughout this paper.
\begin{theorem}[\cite{ZhouT:2022}]\label{thm:wpr}
  Let $u^{0}(x)\equiv0$ and $f(x,t)$ in \eqref{eq:tfdesub} satisfy Assumption \ref{asm:af} (in Section \ref{sec:4.2}). Then the problem \eqref{eq:tfdesub} has a unique solution $u\in C((0, T]; L^{2}(\Omega))$, which satisfies
  \begin{equation}\label{eq:2.3}
    \|u(t)\|\leq ct^{\alpha+\mu}, \quad \mu>-1,~~t>0.
  \end{equation}
\end{theorem}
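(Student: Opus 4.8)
The plan is to represent the solution through the Laplace transform and then read off both the regularity and the sharp bound from a contour integral. Since $u^{0}\equiv 0$, applying the Laplace transform in time to \eqref{eq:tfdesub} turns the Caputo derivative into multiplication by $z^{\alpha}$ and yields the algebraic equation $z^{\alpha}\widehat{u}(z)+A\widehat{u}(z)=\widehat{f}(z)$, so that
\begin{equation*}
  \widehat{u}(z)=(z^{\alpha}+A)^{-1}\widehat{f}(z).
\end{equation*}
Because $A$ is self-adjoint positive definite, its spectrum lies in $(0,\infty)$, and for $z$ in a sector $\Sigma_{\theta}=\{z\neq 0:|\arg z|<\theta\}$ with $\theta\in(\pi/2,\pi)$ (and $\alpha\theta<\pi$, which is possible since $\alpha<1$) the quantity $z^{\alpha}$ stays off the negative real axis; the spectral theorem then gives the resolvent bound $\|(z^{\alpha}+A)^{-1}\|\leq C|z|^{-\alpha}$ uniformly on $\Sigma_{\theta}$. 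The solution is recovered by inverting the transform along a deformed Bromwich (Hankel) contour
\begin{equation*}
  u(t)=\frac{1}{2\pi i}\int_{\Gamma_{\theta,\kappa}}e^{zt}(z^{\alpha}+A)^{-1}\widehat{f}(z)\,\mathrm{d}z,
\end{equation*}
where $\Gamma_{\theta,\kappa}$ consists of the two rays $\{re^{\pm i\theta}:r\geq\kappa\}$ joined by the circular arc $\{\kappa e^{i\psi}:|\psi|\leq\theta\}$.

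Next I would extract from Assumption \ref{asm:af} the crucial estimate on the transformed source: the hypothesis that $f$ behaves like $t^{\mu}$ at the origin (e.g.\ $f(x,t)=t^{\mu}g(x,t)$ with $-1<\mu$) should guarantee that $\widehat{f}(z)$ is analytic on $\Sigma_{\theta}$ and satisfies $\|\widehat{f}(z)\|\leq C|z|^{-(1+\mu)}$, mirroring the fact that the Laplace transform of $t^{\mu}$ equals $\Gamma(1+\mu)z^{-(1+\mu)}$. Combining this with the resolvent bound gives the pointwise estimate $\|\widehat{u}(z)\|\leq C|z|^{-(\alpha+\mu+1)}$ on the contour, which is precisely the input needed for the final scaling argument.

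The estimate \eqref{eq:2.3} then follows by bounding the contour integral with the choice $\kappa=1/t$. On the rays, writing $z=re^{\pm i\theta}$ and substituting $s=rt$ produces a factor $t^{\alpha+\mu}$ times the convergent integral $\int_{1}^{\infty}e^{s\cos\theta}s^{-(\alpha+\mu+1)}\,\mathrm{d}s$ (convergent since $\cos\theta<0$ and $\alpha+\mu+1>0$); on the arc, $z=t^{-1}e^{i\psi}$ gives $|e^{zt}|\leq e$, $|z|^{-(\alpha+\mu+1)}=t^{\alpha+\mu+1}$ and $|\mathrm{d}z|=t^{-1}\mathrm{d}\psi$, again yielding $Ct^{\alpha+\mu}$. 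Adding the two contributions gives $\|u(t)\|\leq ct^{\alpha+\mu}$ for every $t>0$. The continuity $u\in C((0,T];L^{2}(\Omega))$ is obtained from the same representation, since the integrand is analytic in $z$ and the integral converges locally uniformly in $t>0$, allowing differentiation under the integral sign; uniqueness is immediate from the invertibility of $z^{\alpha}+A$, which makes $\widehat{u}$ uniquely determined.

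I expect the main obstacle to be the second step: turning the qualitative singularity hypothesis in Assumption \ref{asm:af} into the clean sectorial bound $\|\widehat{f}(z)\|\leq C|z|^{-(1+\mu)}$ on all of $\Sigma_{\theta}$, together with the analytic continuation of $\widehat{f}$ needed to justify deforming the Bromwich line onto the Hankel contour. Establishing analyticity and the correct decay rate of $\widehat{f}$ for the general admissible source, rather than only for the model case $t^{\mu}g(x)$, is the delicate part; once that is in hand, the resolvent estimate and the scaling $z\mapsto z/t$ make the remaining bound essentially mechanical.
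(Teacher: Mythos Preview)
The paper does not supply its own proof of Theorem~\ref{thm:wpr}; the result is quoted from \cite{ZhouT:2022} and stated without argument. Your proposal is therefore not being compared against an in-paper proof, but it is exactly the standard Laplace-transform argument one would expect, and it matches the solution representation \eqref{eq:tildeu0}--\eqref{eq:gammavts} that the paper sets up immediately after the theorem.

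Your argument is correct, but the ``main obstacle'' you anticipate in the final paragraph is not an obstacle at all: you have misread Assumption~\ref{asm:af}. That assumption is \emph{not} a qualitative hypothesis on the time-domain singularity of $f$; it is a direct hypothesis on the Laplace transform. It explicitly postulates that $\hat f$ is analytic in the truncated sector $\Sigma_{\theta}^{\varepsilon}=\{z:|z|\ge\varepsilon,\ |\arg z|<\theta\}$ and that $\|\hat f(s)\|\le c|s|^{-\mu-1}$ there. So both the analytic continuation needed to deform the Bromwich line onto $\Gamma_{\theta,\kappa}$ and the sectorial decay bound $\|\hat f(z)\|\le C|z|^{-(1+\mu)}$ are \emph{assumed}, not to be derived. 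With $\kappa=1/t$ (and $\varepsilon$ chosen small enough that $\varepsilon<1/T\le\kappa$), your contour lies inside the region where the assumption applies, and the rest of your estimate goes through exactly as you wrote it. The only thing to tidy up is to note that analyticity is given on $\Sigma_\theta^\varepsilon$ rather than the full $\Sigma_\theta$, which is harmless for the argument.
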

For homogenous case of \eqref{eq:tfdesub} with $u^{0}\in L^{2}(\Omega)$, the corresponding result can be referred to \cite{SakamotoY:2011}.
In such case, there exists a unique weak solution $u\in C([0, T]; L^{2}(\Omega))$ to problem \eqref{eq:tfdesub} such that
  \begin{equation*}
    \max_{0\leq t\leq T}\|u(t)\|\leq c\|u^{0}\|.
  \end{equation*}

By Laplace transform approach, the solution of \eqref{eq:tfdesub} can be represented as
  \begin{equation}\label{eq:tildeu0}
    u(t)=\frac{1}{2\pi i}\int_{\Gamma}e^{s t}(s^{\alpha}+A)^{-1}\big(s^{\alpha-1}u^0+\hat{f}(s)\big)\mathrm{d}s,
  \end{equation}
  or
  \begin{equation}\label{eq:tildeu}
    u(t)=\bar{E}(t)u^0+\int_{0}^{t}E(t-s)f(s)\mathrm{d}s
  \end{equation}
  with operators $\bar{E}(\cdot)$ and $E(\cdot)$ on $L^{2}(\Omega)$  defined by
\begin{align}
  \bar{E}(t)=\frac{1}{2\pi i}\int_{\Gamma}e^{st}(s^{\alpha}+A)^{-1}s^{\alpha-1}\mathrm{d}s,\quad
  E(t)=\frac{1}{2\pi i}\int_{\Gamma}e^{st}(s^{\alpha}+A)^{-1}\mathrm{d}s,\label{eq:E}
\end{align}
  where
  \begin{equation}\label{gammac}
    \Gamma=\{\sigma+iy:\sigma>0,~ y\in\mathbb{R}\}.
  \end{equation}
It is known in \cite{LubichST:1996,Thomee:2006} that the resolvent of the symmetric elliptic operator $A$ obeys the following estimate
\begin{equation}\label{eq:resolv}
  \|(s+A)^{-1}\|\le M|s|^{-1},\quad \forall~s\in\Sigma_{\theta}
\end{equation}
for some $\theta\in (\pi/2,\pi)$, where $\Sigma_{\theta}$ is a sector of the
complex plane $\mathbb{C}$ and given by
\begin{equation}\label{eq:sigt}
  \Sigma_{\theta}=\big\{z\in\mathbb{C}\setminus\{0\}:|\mathrm{arg}z|<\theta\big\}.
\end{equation}
Therefore by the resolvent estimate \eqref{eq:resolv} and Cauchy's theorem, $\Gamma$ in \eqref{eq:tildeu0} and \eqref{eq:E} can be replaced by $\Gamma_{\varepsilon}^{\theta}\cup S_{\varepsilon}$ defined as
  \begin{equation}\label{eq:gammavts}
    \Gamma_{\varepsilon}^{\theta}\cup S_{\varepsilon}=\{\rho e^{\pm i\theta}:~ \rho\geq\varepsilon\}\cup\{\varepsilon e^{i\xi}: -\theta\leq \xi\leq \theta\}.
  \end{equation}

The semidiscrete problem by finite element method for \eqref{eq:tfdesub} is to find $u_{h}(t)\in X_{h}$ satisfying
\begin{equation}\label{eq:sFE}
  \begin{aligned}
    \big({^{C\!}}D^{\alpha}_tu_{h}(t),\varphi\big)+A\big(u_{h}(t),\varphi\big)&=\big(f(t),\varphi\big),\quad \forall~ \varphi\in X_{h},\\
    u_h(0)&=P_hu^0,
  \end{aligned}
\end{equation}
where $X_{h}\subset H_0^1(\Omega)$ is a continuous piecewise linear finite element space on a regular triangulation mesh $\mathcal{T}_h$ of the domain $\Omega$, $h=\max_{T\in\mathcal{T}_h}\mathrm{diam}(T)$ is the maximal diameter. $A(\cdot,\cdot)$ denotes the bilinear form associated with the elliptic operator $A$. The $L^{2}$-projection operator $P_{h}:L^{2}(\Omega)\to X_{h}$ in \eqref{eq:sFE} is defined by
$$(P_{h}\varphi, \psi)=(\varphi, \psi), ~\forall~\psi\in X_{h}.$$
Furthermore, the semidiscrete scheme \eqref{eq:sFE} can be rewritten in the form of
\begin{equation}\label{eq:sFEO}
  {^{C\!}}D^{\alpha}_tu_{h}(t)+A_{h} u_{h}(t)=f_{h}(t),~\forall~t>0,~\text{and}~u_{h}(0)=P_{h}u^{0}
\end{equation}
with $f_{h}=P_{h}f$, where the operator $A_{h}: X_{h}\rightarrow X_{h}$ is defined by
\begin{equation}\label{eq:Deltah}
  (A_h\varphi, \psi)=A(\varphi,\psi),~\forall~\varphi, \psi\in X_{h}.
\end{equation}

Similarly, the semidiscrete solution of \eqref{eq:sFEO} for $t>0$ can be represented by
\begin{equation}\label{eq:uh}
  u_{h}(t)=\frac{1}{2\pi i}\int_{\Gamma_{\varepsilon}^{\theta}\cup S_{\varepsilon}}
  e^{st}(s^{\alpha}+A_h)^{-1}\big(s^{\alpha-1}u_h(0)+\hat{f}_{h}(s)\big)\mathrm{d}s,
\end{equation}
or
\begin{equation}\label{eq:uhE}
  u_{h}(t)=\bar{E}_h(t)u_h(0)+\int_0^tE_h(t-s)f_h(s)\mathrm{d}s,
\end{equation}
where $\bar{E}_h(\cdot)$ and $E_h(\cdot)$ are operators on $X_{h}$ given by
\begin{align}
  \bar{E}_h(t)&=\frac{1}{2\pi i}\int_{\Gamma_{\varepsilon}^{\theta}\cup S_{\varepsilon}}e^{st}s^{\alpha-1}(s^{\alpha}+A_{h})^{-1}\mathrm{d}s,\label{eq:bEh}\\
  E_h(t)&=\frac{1}{2\pi i}\int_{\Gamma_{\varepsilon}^{\theta}\cup S_{\varepsilon}}e^{st}(s^{\alpha}+A_{h})^{-1}\mathrm{d}s.\label{eq:Eh}
\end{align}

The error estimates for the semidiscrete Galerkin finite element scheme \eqref{eq:sFEO} have been well established in \cite{JinLPZ:2015,JinLZ:2013,Karaa2018,ZhouT:2022}. Moreover, the error analysis on the lumped mass finite element scheme has been discussed in \cite{JinLPZ:2015,JinLZ:2013,ZhouT:2022} as well.

\section{Crank-Nicolson scheme for nonsingular source terms}\label{sec:3}
In this section, we propose a novel fully discrete Crank-Nicolson scheme without corrections for solving  \eqref{eq:tfdesub} and establish the temporal error estimates. The discussion is based on the spatial semidiscrete scheme \eqref{eq:sFEO} by the Galerkin finite element method.

\subsection{Crank-Nicolson scheme I}\label{sec:CN}

We first reformulate the semidiscrete scheme \eqref{eq:sFEO} by introducing two functions $U_{h}(t)$ and $F_{h}(t)$, which are defined by $\int_{0}^{t}u_{h}(\xi)\mathrm{d}\xi$ and $\int_{0}^{t}f_{h}(\xi)\mathrm{d}\xi$, respectively. If $u_{h}$ and $f_{h}$ are in $L^{1}(0,T;X_{h})$, then $U_{h}(t)$ and $F_{h}(t)$ belong to the space $C([0,T];X_{h})$ and satisfy
\begin{equation}\label{eq:u1}
  D_t{U}_{h}(t)={u}_{h}(t),\quad \quad {U}_{h}(0)=0
\end{equation}
and
\begin{equation}\label{eq:f1}
  D_tF_{h}(t)=f_{h}(t),\quad \quad  F_{h}(0)=0
\end{equation}
for a.e. $t>0$, where $D_t:=\partial /\partial t$. Next, substituting \eqref{eq:u1} and \eqref{eq:f1} into \eqref{eq:sFEO} yields
\begin{equation}\label{eq:DU}
  {^{C\!}}D^{\alpha}_tD_tU_{h}(t)+A_{h}D_tU_{h}(t)=D_tF_{h}(t).
\end{equation}
Moreover, integrating \eqref{eq:DU} from $0$ to $t$ and using the semigroup property of fractional integrals, we obtain
\begin{equation}\label{eq:daU}
  {^{C\!}}D^{\alpha}_tU_{h}+A_{h}U_{h}=F_{h}+D_t\frac{t^{2-\alpha}}{\Gamma(3-\alpha)}u_{h}(0).
\end{equation}
Therefore, \eqref{eq:daU} together with \eqref{eq:u1} and \eqref{eq:f1}, can be recognized as an equivalent form of \eqref{eq:sFEO}. In what follows, a novel fractional Crank-Nicolson scheme will be proposed based on the equivalent form.

Let $\tilde{U}_{h}(t)$ and $\tilde{u}_{h}(t)$ be approximations to $U_{h}(t)$ and $u_{h}(t)$ in \eqref{eq:daU} and \eqref{eq:u1}, respectively. The notation $D_{\tau}^{\alpha}$ denotes the Gr\"unwald-Letnikov difference operator defined by
\begin{equation}\label{eq:cCN}
  D_{\tau}^{\alpha}\tilde{U}_{h}(t)=\tau^{-\alpha}\sum_{j=0}^{\infty}\sigma_{j}\tilde{U}_{h}(t-j\tau),
\end{equation}
where $\{\sigma_{j},~j\ge 0\}$ are coefficients of a generating function such that
\begin{equation}\label{eq:bdf1}
  \frac{1}{\tau^\alpha}\sum\limits_{j=0}^{\infty}\sigma_{j}z^{j}=\omega_1(z)^\alpha,\quad \text{with}~~ \omega_1(z)=\frac{1}{\tau}(1-z),
\end{equation}
and
\begin{equation}\label{eq:sigma}
  \sigma_0=1,~~\sigma_j=(1-\frac{\alpha+1}{j})\sigma_{j-1},~~j\ge1.
\end{equation}
The equation \eqref{eq:daU} is approximated at $t-\frac{\alpha}{2}\tau$ by fractional Crank-Nicolson approach \cite{Dimitrov:2014,JinLZ:2018a}, as $U_{h}(0)=0$, that is
\begin{align*}
  D_{\tau}^{\alpha}U_{h}(t)&={^{C\!}}D^{\alpha}_tU_{h}(t-\frac{\alpha}{2}\tau)+O(\tau^2)\\
  &=(1-\frac{\alpha}{2})~{^{C\!}}D^{\alpha}_tU_{h}(t)+\frac{\alpha}{2}~{^{C\!}}D^{\alpha}_tU_{h}(t-\tau)+O(\tau^2).
\end{align*}
Then
$\tilde{U}_{h}(t)$ and $\tilde{u}_{h}(t)$ satisfy the following difference equations
\begin{align}
    &D_{\tau}^{\alpha}\tilde{U}_{h}(t)+(1-\frac{\alpha}{2})A_{h}\tilde{U}_{h}(t)
    +\frac{\alpha}{2}A_{h}\tilde{U}_{h}(t-\tau)=(1-\frac{\alpha}{2})F_{h}(t)
    +\frac{\alpha}{2}F_{h}(t-\tau)\notag\\
    &\hskip3.7cm+(1-\frac{\alpha}{2})D_{\tau}\frac{t^{2-\alpha}}{\Gamma(3-\alpha)}u_{h}(0)
    +\frac{\alpha}{2}D_{\tau}\frac{(t-\tau)^{2-\alpha}}{\Gamma(3-\alpha)}u_{h}(0),\label{eq:DECN}\\
    &\tilde{u}_{h}(t)=D_{\tau}\tilde{U}_{h}(t):=\frac{1}{\tau}\Big(\frac{3}{2}
    \tilde{U}_{h}(t)-2\tilde{U}_{h}(t-\tau)+\frac{1}{2}\tilde{U}_{h}(t-2\tau)\Big)\label{eq:cuCN}\end{align}
for $t>0$, and are prescribed by zero for $t\leq 0$. Here $D_{\tau}$ denotes the second-order BDF operator.

Given a uniform partition of the interval $[0, T]$,
\begin{equation*}
  0=t_{0}<t_{1}<\cdots<t_{N-1}<t_{N}=T.
\end{equation*}
The step size of the uniform mesh is denoted by $\tau=T/N$ and $t_{n}=n\tau$ for $0\le n\le N$.
Then choosing $t=t_{n}$ for $n=1,\cdots, N$ in \eqref{eq:DECN} and \eqref{eq:cuCN}, we propose a fractional Crank-Nicolson scheme for solving \eqref{eq:tfdesub} as follows
\begin{equation}\label{eq:CN}
  \left\{
  \begin{aligned}
    &\tau^{-\alpha}\sum_{j=0}^{n}\sigma_{j}\tilde{U}_{h}^{n-j}+(1-\frac{\alpha}{2})A_{h}\tilde{U}_{h}^{n}
    +\frac{\alpha}{2}A_{h}\tilde{U}_{h}^{n-1}
    =(1-\frac{\alpha}{2})F_{h}^{n}
    +\frac{\alpha}{2}F_{h}^{n-1}\\
    &\hskip3cm+(1-\frac{\alpha}{2})D_{\tau}\frac{t_{n}^{2-\alpha}}{\Gamma(3-\alpha)}u_{h}(0)
     +\frac{\alpha}{2}D_{\tau}\frac{t_{n-1}^{2-\alpha}}{\Gamma(3-\alpha)}u_{h}(0),\\
    &\tilde{u}_{h}^{n}=\frac{1}{\tau}\Big(\frac{3}{2}\tilde{U}_{h}^{n}-2\tilde{U}_{h}^{n-1}
    +\frac{1}{2}\tilde{U}_{h}^{n-2}\Big),
  \end{aligned}\right.\tag{CN-I}
\end{equation}
with $\tilde{U}_{h}^{0}=0$, where $\tilde{u}_{h}^{n}:=\tilde{u}_{h}(t_{n})$, $\tilde{U}_{h}^{n}:=\tilde{U}_{h}(t_{n})$
and $F_{h}^{n}:=F_{h}(t_{n})$.

\subsection{Solution representations}
By taking Laplace transform on \eqref{eq:u1} and \eqref{eq:daU}, the semidiscrete solution $u_{h}(t)$ in \eqref{eq:sFEO} can also be rewritten as
\begin{equation}\label{eq:uh1}
  u_{h}(t)=\frac{1}{2\pi i}\int_{\Gamma_{\varepsilon}^{\theta}\cup S_{\varepsilon}}e^{st}s\hat{U}_{h}(s)\mathrm{d}s,
\end{equation}
where $\Gamma_{\varepsilon}^{\theta}\cup S_{\varepsilon}$ is defined by \eqref{eq:gammavts} and
\begin{equation}\label{eq:hatUh}
  \hat{U}_{h}(s)=(s^{\alpha}+A_{h})^{-1}
  \big(s^{\alpha-2}u_{h}(0)+s^{-1}\hat{f}_{h}(s)\big).
\end{equation}
The discrete operator $A_h$ defined by \eqref{eq:Deltah} also satisfies the resolvent estimate $\|(s^{\alpha}+A_{h})^{-1}\|\le c|s|^{-\alpha}$ for $s\in\Gamma_{\varepsilon}^{\theta}\cup S_{\varepsilon}$.
Then it follows from \eqref{eq:hatUh} that
\begin{equation}\label{eq:hUhs}
  \|\hat{U}_{h}(s)\|\le c|s|^{-\alpha}(|s|^{\alpha-2}\|u^0\|+|s|^{-1}\|\hat{f}_{h}(s)\|\big).
\end{equation}

It is indicated in \cite{ZhouT:2022} that $\tilde{u}_{h}(t)$ in \eqref{eq:cuCN} is continuous for $t>0$, and can be represented by
\begin{equation}\label{eq:tuh}
  \tilde{u}_{h}(t)=\frac{1}{2\pi i}\int_{\Gamma}e^{st}\omega_2(e^{-s\tau})\widehat{\tilde{U}_{h}}(s)\mathrm{d}s,
\end{equation}
where $\Gamma$ is given by \eqref{gammac} and
\begin{equation}\label{eq:bdf2}
  \omega_{2}(z)=\tau^{-1}\big(\frac{3}{2}-2z+\frac{1}{2}z^{2}\big).
\end{equation}
Moreover, with $\tilde{U}_{h}(t)=0$ for $t\leq 0$, we have from \eqref{eq:cCN} that the Laplace transform of $D_{\tau}^{\alpha}\tilde{U}_{h}(t)$ equals to
\begin{equation*}
  \widehat{D_{\tau}^{\alpha}\tilde{U}_{h}}(s)=\tau^{-\alpha}
  \sum_{j=0}^{\infty}\sigma_{j}\int_{0}^{+\infty}e^{-s t}\tilde{U}_{h}(t-j\tau)\mathrm{d}t
  =\omega_{1}(e^{-s\tau})^{\alpha}\widehat{\tilde{U}_{h}}(s).
\end{equation*}
Then it follows from \eqref{eq:DECN} that
\begin{equation}\label{eq:bUh}
  \widehat{\tilde{U}_{h}}(s)=\big(\omega(e^{-s\tau})^{\alpha}+A_{h}\big)^{-1}
  \big(s^{-1}\hat{f}_{h}(s)+\omega_2(e^{-s\tau})s^{\alpha-3}u_{h}(0)\big),
\end{equation}
where
\begin{equation}\label{eq:gfCN}
  \omega(z)=\frac{1-z}{\tau(1-\frac{\alpha}{2}+\frac{\alpha}{2}z)^{1/\alpha}}.
\end{equation}
In addition, \eqref{eq:tuh} can also be rewritten as
\begin{equation}\label{eq:buhE}
  \tilde{u}_{h}(t)=\bar{E}_h^{\tau}(t)u_h(0)+\int_0^tE_h^{\tau}(t-\zeta)f_h(\zeta)\mathrm{d}\zeta,
\end{equation}
where $\bar{E}_h^{\tau}(\cdot)$ and $E_h^{\tau}(\cdot)$ are operators on $X_{h}$ given by
\begin{align}
  \bar{E}_h^{\tau}(t)&=\frac{1}{2\pi i}\int_{\Gamma}e^{st}\omega_2(e^{-s\tau})^2(\omega(e^{-s\tau})^{\alpha}+A_{h})^{-1}s^{\alpha-3}\mathrm{d}s,\label{eq:bEht}\\
  E_h^{\tau}(t)&=\frac{1}{2\pi i}\int_{\Gamma}e^{st}\omega_2(e^{-s\tau})(\omega(e^{-s\tau})^{\alpha}+A_{h})^{-1}s^{-1}\mathrm{d}s.\label{eq:Eht}
\end{align}

From Lemmas B.3 in \cite{ZhouT:2022}, $\omega_2(e^{-s\tau})$ in \eqref{eq:bdf2} satisfies the following preliminary lemma for the error analysis of the fractional Crank-Nicolson scheme \eqref{eq:CN}.

\begin{lemma} \label{lem:fbdf2err}
  If $s\in\mathbb{C}$ and $|s\tau|\le r$ for finite $r>0$, then
  \begin{equation}
    |\omega_2(e^{-s\tau})|\leq C|s| \text{~~and~~}
    |s^{\beta}-\omega_2(e^{-s\tau})^{\beta}|\leq C\tau^2|s|^{\beta+2}
  \end{equation}
  hold for $0<\beta\leq 1$, where $C$ denotes a generic constant dependent on the radius $r$.
\end{lemma}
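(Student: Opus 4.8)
The plan is to reduce both estimates to elementary facts about the analytic symbol $g(w):=\tfrac{3}{2}-2e^{-w}+\tfrac{1}{2}e^{-2w}$, since \eqref{eq:bdf2} gives $\omega_2(e^{-s\tau})=\tau^{-1}g(s\tau)$. First I would record the local behaviour of $g$ at the origin: a direct computation gives $g(0)=0$, $g'(0)=1$ and $g''(0)=0$, so that $g(w)=w+O(w^3)$. (Equivalently one has the factorisation $g(w)=\tfrac{1}{2}(e^{-w}-1)(e^{-w}-3)$, which shows that $w=0$ is the only zero of $g$ in a fixed disk about the origin.) Consequently the quotient $h(w):=g(w)/w$ extends to an analytic function on $\{|w|\le r\}$ with $h(0)=1$ and $h(w)-1=O(w^2)$; being analytic on a compact disk it is bounded, and $h(w)-1$ vanishes to second order, so there are constants $C_1,C_2$ depending only on $r$ with $|h(w)|\le C_1$ and $|h(w)-1|\le C_2|w|^2$ for $|w|\le r$.

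The first estimate is then immediate: writing $w=s\tau$ with $|w|\le r$, we get $|\omega_2(e^{-s\tau})|=\tau^{-1}|s\tau|\,|h(s\tau)|\le C_1|s|$.

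For the second estimate I would factor $\omega_2(e^{-s\tau})^{\beta}=s^{\beta}h(s\tau)^{\beta}$, so that $s^{\beta}-\omega_2(e^{-s\tau})^{\beta}=s^{\beta}\big(1-h(s\tau)^{\beta}\big)$, and it suffices to show $|1-h(w)^{\beta}|\le C|w|^2$ uniformly for $0<\beta\le 1$ and $|w|\le r$. Here I would split the disk at a small radius $r_0$. For $|w|\le r_0$, chosen so that $|h(w)-1|\le\tfrac12$, the number $z=h(w)$ lies in the half-plane $\Re z\ge\tfrac12$, hence off the branch cut of $z^{\beta}$; writing $z^{\beta}-1=e^{\beta\log z}-1$ with the principal logarithm and using $|e^{a}-1|\le|a|e^{|a|}$ together with $\beta\le 1$ and the Lipschitz bound $|\log z|\le C|z-1|$ on this region, one obtains $|h(w)^{\beta}-1|\le C|h(w)-1|\le CC_2|w|^2$, which is the claim. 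For $r_0\le|w|\le r$ the two scales $|s|$ and $1/\tau$ are comparable, so I would simply use the crude bound $|s^{\beta}-\omega_2(e^{-s\tau})^{\beta}|\le|s|^{\beta}+|\omega_2(e^{-s\tau})|^{\beta}\le(1+C_1^{\beta})|s|^{\beta}$ and absorb it into the right-hand side via $\tau^2|s|^2=|w|^2\ge r_0^2$, enlarging the constant if necessary.

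The step I expect to be the main obstacle is the bound on $1-h(w)^{\beta}$ uniformly in the exponent $\beta\in(0,1]$: one must keep $h(w)$ away from zero and off the negative real axis so that $h(w)^{\beta}$ behaves like $1+\beta(h(w)-1)$, and the constant in $|h(w)^{\beta}-1|\le C|h(w)-1|$ must not degenerate as $\beta\to 0$. The case split at $r_0$ is precisely what secures this: near the origin analyticity forces $h$ close to $1$, while away from it the trivial size estimates suffice because $\tau^2|s|^2$ is then bounded below.
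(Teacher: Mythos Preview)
The paper does not actually prove this lemma; it records it as Lemma~B.3 of \cite{ZhouT:2022} and moves on. Your direct argument via the expansion $g(w)=w-\tfrac{1}{3}w^{3}+O(w^{4})$, the analytic quotient $h(w)=g(w)/w$ with $h(0)=1$ and $h(w)-1=O(w^{2})$, and the case split at a small radius $r_{0}$ to secure uniformity in $\beta\in(0,1]$, is the natural self-contained proof and is correct.

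One point worth making explicit: the step $\omega_{2}(e^{-s\tau})^{\beta}=s^{\beta}h(s\tau)^{\beta}$ uses the multiplicativity $(ab)^{\beta}=a^{\beta}b^{\beta}$ for the principal branch, which requires $\arg s+\arg h(s\tau)\in(-\pi,\pi]$. On $|w|\le r_{0}$ you have $|h(w)-1|\le\tfrac{1}{2}$ and hence $|\arg h(w)|\le\pi/6$, so the identity holds whenever $|\arg s|<5\pi/6$; this certainly covers all the contours $\Gamma^{\theta}_{\varepsilon,\tau}\cup S_{\varepsilon}$ (with $\theta$ close to $\pi/2$) on which the lemma is invoked in the paper. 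Taken literally for arbitrary $s\in\mathbb{C}$ the factorisation can fail on the branch cut, but that is an imprecision in the stated hypothesis rather than a gap in your argument, and in any case the only instance actually used in this paper is $\beta=1$, where no branch issue arises.
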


In addition, the term $\omega(e^{-s\tau})$ in \eqref{eq:gfCN} satisfies the following two preliminary lemmas, which are obtained from the results of Lemmas 3.3 and 3.4 in \cite{JinLZ:2018a}.

\begin{lemma}\label{lem:fCN}
  Let $\alpha\in(0,1)$ and $\phi\in(\alpha\pi/2, \pi)$ be fixed. Then there exists a $\delta_0 > 0$ (independent of $\tau$) such that for $\delta\in(0, \delta_0]$ and $\theta\in(\pi/2, \pi/2+\delta_0]$, we have $\omega(e^{-s\tau})^{\alpha}\in\Sigma_{\phi}$ for any $s\in\Sigma_{\pi/2}\cup\{z\in\Sigma_{\theta}\setminus\Sigma_{\pi/2}:~|\Im z|\le\pi/\tau\}$.
\end{lemma}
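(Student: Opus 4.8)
The plan is to strip away everything that does not affect membership in the sector $\Sigma_\phi$ and reduce the claim to a bound on the argument of a single $\tau$-independent holomorphic function. Since $\tau^{-\alpha}>0$ is a positive real scaling and $\Sigma_\phi$ is invariant under such scalings, I may work with $\omega(e^{-s\tau})^\alpha = \tau^{-\alpha}\,\zeta^\alpha\,(1-\tfrac\alpha2\zeta)^{-1}$, where $\zeta := 1-e^{-s\tau}$ and I have used the identity $1-\tfrac\alpha2+\tfrac\alpha2 e^{-s\tau} = 1-\tfrac\alpha2(1-e^{-s\tau})$. Setting $w=s\tau$, the admissible set becomes the $\tau$-independent region $\Sigma_{\pi/2}\cup\{w\in\Sigma_\theta\setminus\Sigma_{\pi/2}:|\Im w|\le\pi\}$ and the quantity to control becomes the $\tau$-independent function $h(\zeta)=\zeta^\alpha(1-\tfrac\alpha2\zeta)^{-1}$ with $\zeta=1-e^{-w}$; this reduction is precisely what makes the threshold $\delta_0$ independent of $\tau$. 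It therefore suffices to show $|\arg h(\zeta)|<\phi$ on this region.

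First I would treat $w\in\Sigma_{\pi/2}$, i.e.\ $\Re w>0$. Then $|e^{-w}|<1$, so $\zeta$ ranges over the open disk $D(1,1)$ of radius $1$ centered at $1$. On $\overline{D(1,1)}$ the denominator $1-\tfrac\alpha2\zeta$ is nonzero (its only root $\zeta=2/\alpha>2$ lies outside) and $\zeta$ avoids the nonpositive reals, so $h$ is holomorphic and zero-free and $\arg h=\Im\log h$ is harmonic on $D(1,1)$. By the maximum principle its extreme values are attained on the boundary circle, which I parametrize as $\zeta=1-e^{-iy}=2\sin(y/2)\,e^{i(\pi/2-y/2)}$. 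Computing $\arg h$ along this circle and bounding it---this is where I would invoke the boundary estimates underlying Lemmas 3.3 and 3.4 of \cite{JinLZ:2018a}---yields $|\arg h|\le\alpha\pi/2$, the value $\alpha\pi/2$ being only approached as $\zeta\to0$. Hence $|\arg h|<\alpha\pi/2<\phi$ throughout $\Sigma_{\pi/2}$.

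It remains to extend the bound across the thin wedge $\pi/2<|\arg w|<\theta$ with $|\Im w|\le\pi$. This set is compact: the constraint $|\Im w|\le\pi$ together with $|\arg w|$ close to $\pi/2$ forces $|w|=|\Im w|/\sin|\arg w|\le\pi/\cos\delta_0$. I would split it into a small disk around the origin and its complement. Near $w=0$ one has $\zeta\sim w$ and $h(\zeta)\sim w^\alpha$, so $|\arg h|\lesssim\alpha|\arg w|<\alpha(\pi/2+\delta)$; choosing $\delta_0<(\phi-\alpha\pi/2)/\alpha$ keeps this below $\phi$, and this is exactly the place where the hypothesis $\phi>\alpha\pi/2$ is needed. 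On the compact complement $h(1-e^{-w})$ is continuous and already satisfies $|\arg h|<\phi$ on the imaginary-axis boundary $\{iy:0<|y|\le\pi\}$ by the previous paragraph, so uniform continuity lets me shrink $\delta_0$ further so that the strict inequality survives on the adjacent sliver.

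The main obstacle is the boundary estimate $|\arg h|\le\alpha\pi/2$ on $\partial D(1,1)$. The factor $\zeta^\alpha$ alone contributes at most $\alpha\pi/2$, but the denominator $1-\tfrac\alpha2\zeta$ rotates the argument, and one must check that this rotation never pushes $|\arg h|$ past $\alpha\pi/2$. A Taylor expansion at $y=0$ shows the deviation of $\arg h$ from $\alpha\pi/2$ first appears at order $y^3$ with coefficient proportional to $-(1-\alpha)(2-\alpha)<0$, i.e.\ with the favorable sign; verifying that no sign change occurs over the full range $y\in(0,\pi)$ is the delicate point, for which I would rely on the cited trigonometric analysis of \cite{JinLZ:2018a} rather than reproducing it here.
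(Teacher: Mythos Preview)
The paper does not give its own proof of this lemma: immediately before the statement it says the result is ``obtained from the results of Lemmas 3.3 and 3.4 in \cite{JinLZ:2018a}'', and no further argument appears. Your sketch is therefore not competing with a proof in the paper but reconstructing the one it cites, and the reconstruction is sound: the rescaling $w=s\tau$ and the substitution $\zeta=1-e^{-w}$ that render the problem $\tau$-independent, the maximum-principle reduction to the circle $\partial D(1,1)$, and the compactness-plus-continuity extension across the thin wedge are exactly the structure of the argument in \cite{JinLZ:2018a}. Since you also defer the decisive boundary inequality $|\arg h|\le\alpha\pi/2$ on $\partial D(1,1)$ to that same reference, your proposal and the paper's treatment are in complete agreement, with you supplying more detail than the paper does.

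One small slip worth fixing: you write that $\zeta$ ``avoids the nonpositive reals'' on $\overline{D(1,1)}$, but $\zeta=0$ lies on that boundary, so $h$ is neither holomorphic nor zero-free on the closed disk. You already identify the remedy (the extremal value $\alpha\pi/2$ is only approached as $\zeta\to0$); to make the maximum-principle step rigorous, run it on $D(1,1)\setminus \overline{D(0,\epsilon)}$ and let $\epsilon\to0$.
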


\begin{lemma}\label{lem:fCNerr}
  Let $\alpha\in (0, 1)$. There exists a constant $\delta_1$, for $\delta\in(0,\delta_1]$ and $\theta\in(\pi/2, \pi/2+\delta_1]$, we have for any $s\in\{z\in\Sigma_{\theta}\setminus\Sigma_{\pi/2}:~|\Im z|\le\pi/\tau\}$ and $0<\beta\le 1$ that
  \begin{equation*}
    C_0|s|\le|\omega(e^{-s\tau})|\le C_1|s| \text{~~and~~}
    |s^{\beta}-\omega(e^{-s\tau})^{\beta}|\leq C\tau^2|s|^{\beta+2}.
  \end{equation*}
\end{lemma}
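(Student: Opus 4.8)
The plan is to treat both inequalities as scalar statements about the function obtained by normalizing $\omega(e^{-s\tau})$ by $s$, and then transfer them to the $\beta$-th powers. The decisive preliminary observation is that on the region $R_\tau:=\{z\in\Sigma_{\theta}\setminus\Sigma_{\pi/2}:|\Im z|\le\pi/\tau\}$ the product $s\tau$ stays in a fixed compact set independent of $\tau$: writing $s=re^{i\psi}$ with $\psi\in(\pi/2,\theta]$ (and symmetrically in the conjugate sector), the constraint $r\sin\psi=|\Im s|\le\pi/\tau$ together with $\sin\psi\ge\sin\theta$ forces $|s\tau|=r\tau\le\pi/\sin\theta=:r_0$. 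Hence Lemma \ref{lem:fbdf2err} applies and every expansion below is uniform. I set $x:=s\tau$ and introduce $\Phi(x):=\frac{1-e^{-x}}{x}\,(1-\frac{\alpha}{2}+\frac{\alpha}{2}e^{-x})^{-1/\alpha}$, so that by the factorization $\omega(z)=\omega_1(z)(1-\frac{\alpha}{2}+\frac{\alpha}{2}z)^{-1/\alpha}$ (with $\omega_1$ as in \eqref{eq:bdf1}) we have $\omega(e^{-s\tau})=s\,\Phi(s\tau)$.

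First I would prove the comparability $C_0|s|\le|\omega(e^{-s\tau})|\le C_1|s|$, which amounts to $C_0\le|\Phi(x)|\le C_1$ on the image of $R_\tau$ under $s\mapsto s\tau$. The factor $\frac{1-e^{-x}}{x}$ is entire, equals $1$ at $x=0$, and its only zeros are at $x=2\pi ik$ with $k\neq0$, all excluded by $|\Im x|\le\pi$; thus it is bounded above and below on the compact region. For the remaining factor I would show $g(x):=1-\frac{\alpha}{2}+\frac{\alpha}{2}e^{-x}$ stays bounded away from $0$ and $\infty$. Its zero lies at $e^{-x}=-(2-\alpha)/\alpha$, i.e. at $\Im x=\pm\pi$, $\Re x=-\ln\frac{2-\alpha}{\alpha}$, a point whose argument is strictly larger than $\pi/2$ for fixed $\alpha\in(0,1)$; choosing $\delta_1$ small so that $\arg s=\arg x$ stays below that argument keeps $x$ away from the singularity. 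Equivalently, Lemma \ref{lem:fCN} already guarantees $\omega(e^{-s\tau})^{\alpha}\in\Sigma_{\phi}$, hence $\omega(e^{-s\tau})\neq0$, on this region. Compactness then delivers the two-sided bound.

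Next I would establish the consistency estimate, whose point is that the scheme makes $\Phi$ second-order accurate. A Taylor expansion at $x=0$ gives $\Phi(0)=1$ and $\Phi'(0)=0$, since the linear terms of $\frac{1-e^{-x}}{x}=1-\frac{x}{2}+\cdots$ and of $g(x)^{-1/\alpha}=1+\frac{x}{2}+\cdots$ cancel; as $\Phi$ is analytic and non-vanishing on the fixed compact region from the previous step, the remainder is uniform, $|\Phi(x)-1|\le C|x|^2$. For the $\beta$-th power I write $s^{\beta}-\omega(e^{-s\tau})^{\beta}=s^{\beta}\big(1-\Phi(x)^{\beta}\big)$ and bound $|1-\Phi(x)^{\beta}|\le C|x|^2$ uniformly in $\beta\in(0,1]$ by a compactness splitting: for $|x|\le\epsilon$, $\Phi(x)$ is close to $1$ and $|1-\Phi(x)^{\beta}|\le C|1-\Phi(x)|\le C|x|^2$; for $\epsilon\le|x|\le r_0$, the comparability bound and $\beta\le1$ give $|1-\Phi(x)^{\beta}|\le 1+C_1^{\beta}\le C$, while $|x|^2\ge\epsilon^2$, so the inequality holds trivially with an enlarged constant. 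Substituting $x=s\tau$ yields $|s^{\beta}-\omega(e^{-s\tau})^{\beta}|\le C|s|^{\beta}\tau^2|s|^2=C\tau^2|s|^{\beta+2}$.

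The main obstacle is securing all of this uniformly in $\tau$ and in $\beta$ simultaneously: one must fix a single $\delta_1$ so that, for every $\tau$, the set $\{s\tau:s\in R_\tau\}$ lands in one compact region avoiding both the zero of $g$ (where $\Phi$ blows up) and the branch cut of $w\mapsto w^{\beta}$ applied to $\Phi(x)$, and so that the resulting $C_0,C_1,C$ depend on neither $\tau$ nor $\beta$. This is delicate precisely when $\alpha$ is close to $1$, for then the zero of $g$ approaches the imaginary axis and forces $\theta$ very close to $\pi/2$; controlling it is exactly the role of the smallness of $\delta_1$ and the reason the estimate is confined to the truncated sector $\{z\in\Sigma_{\theta}\setminus\Sigma_{\pi/2}:|\Im z|\le\pi/\tau\}$. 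Once this uniformity is in place, the argument follows Lemmas~3.3--3.4 of \cite{JinLZ:2018a}, with the remaining steps being routine.
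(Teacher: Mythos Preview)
Your proposal is correct and matches the approach the paper relies on: the paper does not supply an independent proof of this lemma but simply records that it is ``obtained from the results of Lemmas~3.3 and~3.4 in \cite{JinLZ:2018a}'', and your scaling argument via $\Phi(x)=\omega(e^{-s\tau})/s$ together with the compactness of $\{s\tau:s\in R_\tau\}$ and the cancellation $\Phi(0)=1$, $\Phi'(0)=0$ is precisely the mechanism behind those lemmas. Your concluding remark already acknowledges this alignment, so there is nothing to add.
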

By \eqref{eq:bUh} and Lemma \ref{lem:fCN}, we further have
\begin{equation}\label{eq:htUhs}
  \|\widehat{\tilde{U}_{h}}(s)\|\le c|\omega(e^{-s\tau})|^{-\alpha}\big(|s|^{-1}\|\hat{f}_{h}(s)\|+|\omega_2(e^{-s\tau})|\cdot|s|^{\alpha-3}\|u^0\|\big)
\end{equation}
holds for any $s\in\Sigma_\theta\setminus\Sigma_{\pi/2}$ with $|\Im s|\le\pi/\tau$ or $s\in\Sigma_{\pi/2}$.
\begin{lemma}\label{lem:uhn}
  Let $\tilde{u}_{h}(t)$ be the solution to the difference equations \eqref{eq:DECN}-\eqref{eq:cuCN} with $u^0(x)\in L^2(\Omega)$ and $f_h(t)=t^{\mu}g_h(x)$, $\mu>-1$. Then \eqref{eq:tuh} becomes
      \begin{equation}\label{eq:lemuhn}
        \begin{split}
          \tilde{u}_{h}(t)&=\frac{1}{2\pi i}\int_{\Gamma_{\varepsilon,\tau}^{\theta}\cup S_{\varepsilon}}e^{s t}\omega_2(e^{-s\tau})\widehat{\tilde{U}_{h}}(s)\mathrm{d}s\\
          &~~~~~+\sum_{\substack{p=-\infty \\ p\ne 0}}^{+\infty}\frac{1}{2\pi i}\int_{\Gamma_{0,\tau}^{\theta}}e^{(s+i2p\pi/\tau) t}\omega_{2}(e^{-s\tau})\widehat{\tilde{U}_{h}}(s+i2p\pi/\tau)\mathrm{d}s
        \end{split}
      \end{equation}
  for $t\in(0,T]$, where $\widehat{\tilde{U}_{h}}(s)$, $\Gamma_{\varepsilon,\tau}^{\theta}$ and $S_{\varepsilon}$ are given by \eqref{eq:bUh}, \eqref{eq:gett} and \eqref{eq:gammavts}, respectively.
\end{lemma}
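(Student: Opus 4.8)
The plan is to obtain \eqref{eq:lemuhn} from the line-integral representation \eqref{eq:tuh} over the vertical contour $\Gamma$ in \eqref{gammac} by decomposing $\Gamma$ into horizontal strips of height $2\pi/\tau$ and deforming the vertical segment inside each strip onto a (shifted) Hankel-type contour. The feature that makes this succeed is the $2\pi i/\tau$-periodicity in $s$ of the symbols $\omega(e^{-s\tau})$ in \eqref{eq:gfCN} and $\omega_2(e^{-s\tau})$ in \eqref{eq:bdf2}, i.e. $e^{-(s+i2p\pi/\tau)\tau}=e^{-s\tau}$, whereas the remaining factors $s^{-1}$, $s^{\alpha-3}$ and $\hat f_h(s)$ entering $\widehat{\tilde U_h}(s)$ in \eqref{eq:bUh} are not periodic but are analytic away from the single branch point $s=0$.

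First I would record the decay of the integrand of \eqref{eq:tuh} along $\Gamma$, which licenses splitting the line integral into the sum over strips $\Gamma\cap\{(2p-1)\pi/\tau\le\Im s\le(2p+1)\pi/\tau\}$ and interchanging summation with integration. On $\Re s=\sigma>0$ the quantity $\omega_2(e^{-s\tau})$ is bounded and periodic in $\Im s$, and by Lemma \ref{lem:fCN} the resolvent $(\omega(e^{-s\tau})^{\alpha}+A_h)^{-1}$ is likewise bounded there; with $f_h(t)=t^{\mu}g_h$ one has $\hat f_h(s)=\Gamma(\mu+1)s^{-\mu-1}g_h$, so by \eqref{eq:htUhs} the integrand decays like $|\Im s|^{-\mu-2}+|\Im s|^{\alpha-3}$ as $|\Im s|\to\infty$, which is integrable since $\mu>-1$ and $0<\alpha<1$.

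The core step is the deformation inside a fixed strip. Shifting by $-i2p\pi/\tau$ identifies the $p$-th strip with the central strip $\{z\in\Sigma_\theta:|\Im z|\le\pi/\tau\}$, on which Lemma \ref{lem:fCN} guarantees $\omega(e^{-s\tau})^{\alpha}\in\Sigma_\phi$ and hence analyticity of the resolvent throughout the region swept between the vertical segment and the rays at angles $\pm\theta$. For $p=0$ the factors $s^{-1}$, $s^{\alpha-3}$ and $s^{-\mu-1}$ have their branch point at the enclosed origin, so Cauchy's theorem deforms the central vertical segment onto $\Gamma_{\varepsilon,\tau}^{\theta}\cup S_\varepsilon$, the arc $S_\varepsilon$ serving to excise $s=0$; for $p\ne0$ the origin lies outside the strip, the integrand is analytic up to the vertex $i2p\pi/\tau$, and the deformation target is the arc-free contour $\Gamma_{0,\tau}^{\theta}$ shifted by $i2p\pi/\tau$. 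In either case the deformation introduces two horizontal connecting segments at the strip boundaries $\Im s=(2p\pm1)\pi/\tau$.

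Finally I would sum the strip identities. Because the deformation is performed with the full unshifted integrand, the top segment of strip $p$ and the bottom segment of strip $p+1$ sit at the common height $(2p+1)\pi/\tau$, carry the identical integrand, and are traversed in opposite orientations, so they cancel pairwise; in the truncation from $-P$ to $P$ only the two outermost horizontal segments at $\Im s=\pm(2P+1)\pi/\tau$ survive, and these vanish as $P\to\infty$ by the decay above. What is left is the sum of Hankel integrals, whose $p=0$ term is the first integral in \eqref{eq:lemuhn}; for $p\ne0$ the substitution $s\mapsto s+i2p\pi/\tau$ returns $\Gamma_{0,\tau}^{\theta}+i2p\pi/\tau$ to $\Gamma_{0,\tau}^{\theta}$ and, using $\omega_2(e^{-(s+i2p\pi/\tau)\tau})=\omega_2(e^{-s\tau})$, reproduces exactly the $p$-sum in \eqref{eq:lemuhn}; its absolute convergence again follows from \eqref{eq:htUhs}, since $|e^{(s+i2p\pi/\tau)t}|=|e^{st}|\le1$ on $\Gamma_{0,\tau}^{\theta}$ while the $s$-power factors give terms summable in $p$. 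I expect the main obstacle to be the bookkeeping of these horizontal connecting segments: verifying simultaneously that adjacent segments cancel, that the arc is required only for $p=0$, and that Lemma \ref{lem:fCN} certifies analyticity across every swept region, so that the interchange of the infinite sum with the deformation is fully rigorous.
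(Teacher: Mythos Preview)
Your proposal is correct and follows essentially the same approach as the paper: both arguments start from the vertical-line representation \eqref{eq:tuh}, exploit the $2\pi i/\tau$-periodicity of $\omega(e^{-s\tau})$ and $\omega_2(e^{-s\tau})$, apply Cauchy's theorem to deform onto the stacked contours $\Gamma_{\varepsilon,\tau}^{\theta}\cup S_\varepsilon$ and the shifted $\Gamma_{0,\tau}^{\theta}$, and use the decay $|s|^{-\mu-2}+|s|^{\alpha-3}$ to control the remainder. The only organizational difference is that the paper deforms the truncated vertical segment $[\sigma-i(2\bar N+1)\pi/\tau,\,\sigma+i(2\bar N+1)\pi/\tau]$ in a single application of Cauchy's theorem (so the shifted Hankel contours share endpoints and no interior horizontal segments ever appear), whereas you deform strip by strip and then cancel adjacent horizontal segments pairwise; the two are equivalent.
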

\begin{proof}
  The solution $\tilde{u}_{h}(t)$ to \eqref{eq:cuCN} can be reformulated from \eqref{eq:tuh} as follows
  \begin{equation}\label{eq:bbuh}
    \tilde{u}_{h}(t)=\lim_{L\to+\infty}\frac{1}{2\pi i}\int_{\sigma-iL}^{\sigma+iL}
    e^{st}\omega_2(e^{-s\tau})\widehat{\tilde{U}_{h}}(s)\mathrm{d}s,
  \end{equation}
  where $\sigma=t^{-1}$ when $t\ge\tau$ and $\sigma=\tau^{-1}$ when $0<t\le\tau$.
  Then for any $L>0$ and fixed $\tau>0$, there exists $\bar{N}\in\mathbb{N}^{+}$ such that $\left( 2\bar{N}+1\right)\pi/\tau\le L\le  (2\bar{N}+3)\pi/\tau$, and the integral in \eqref{eq:bbuh} can be divided into three parts
  \begin{equation}\label{eq:iut}\small
    \begin{split}
    \int_{\sigma-iL}^{\sigma+iL}e^{st}\omega_2(e^{-s\tau})\widehat{\tilde{U}_{h}}(s)\mathrm{d}s
         &=\Big(\int_{\sigma+i(2\bar{N}+1)\frac{\pi}{\tau}}^{\sigma+iL}
         +\int_{\sigma-i(2\bar{N}+1)\frac{\pi}{\tau}}^{\sigma+i(2\bar{N}+1)\frac{\pi}{\tau}}+\int_{\sigma-iL} ^{\sigma-i(2\bar{N}+1)\frac{\pi}{\tau}}\Big)
         e^{st}\omega_2(e^{-s\tau})\widehat{\tilde{U}_{h}}(s)\mathrm{d}s.
    \end{split}
  \end{equation}
In addition, together with $f_{h}(t)=t^{\mu}g_h(x)$ and \eqref{eq:htUhs}, we get
\begin{equation}\label{eq:Fs}
  \begin{aligned}
    \|\hat{f}_{h}(s)\|&\le c|s|^{-\mu-1}\|g_h\|,\\
    \|\widehat{\tilde{U}_{h}}(s)\|&\le c|\omega(e^{-s\tau})|^{-\alpha}\big(|s|^{-\mu-2}\|g\|+|\omega_2(e^{-s\tau})|\cdot|s|^{\alpha-3}\|u^0\|\big).
  \end{aligned}
\end{equation}
  For the first integral in \eqref{eq:iut}, it follows from \eqref{eq:Fs} that
  \begin{equation*}
    \begin{split}
      &\|\int_{\sigma+i(2\bar{N}+1)\pi/\tau}^{\sigma+iL}e^{st}\omega_2(e^{-s\tau})
      \widehat{\tilde{U}_{h}}(s)\mathrm{d}s\|\\
       &\leq c\int_{\sigma+i(2\bar{N}+1)\pi/\tau}^{ \sigma+i(2\bar{N}+3)\pi/\tau}|e^{st}||\omega_2(e^{-s\tau})|
       |\omega(e^{-s\tau})|^{-\alpha}
       \big(|s|^{-\mu-2}\|g_h\|+|\omega_2(e^{-s\tau})|\cdot|s|^{\alpha-3}\|u^0\|\big)|\mathrm{d}s|   \\
       &\leq c\int_{(2\bar{N}+1)\pi/\tau}^{(2\bar{N}+3)\pi/\tau}e^{\sigma t}\big((\sigma+y-2\bar{N}\pi/\tau)^{1-\alpha}y^{-\mu-2}\|g_h\|
       +(\sigma+y-2\bar{N}\pi/\tau)^{2-\alpha}y^{\alpha-3}\|u^0\|\big)\mathrm{d}y \\
       &\leq c\Big(\frac{\tau^{\alpha+\mu}\|g_h\|}{(2\bar{N}+1)^{\mu+2}}
       +\frac{\|u^0\|}{(2\bar{N}+1)^{3-\alpha}}\Big)e^{\sigma t}.
    \end{split}
  \end{equation*}
  Then the above bound tends to zero when $L\to+\infty$ ($\bar{N}\to+\infty$). The similar result can also be derived for the third integral in \eqref{eq:iut}.

  For the estimate on the second integral in \eqref{eq:iut}, we first introduce some integral curves as follows
  \begin{equation}\label{eq:gett}
    \Gamma_{\varepsilon,\tau}^{\theta}=\{\rho e^{\pm i\theta}:~ \varepsilon\leq\rho\leq \pi/(\tau\sin\theta)\},
  \end{equation}
  \begin{equation}
    \Gamma^{+}=\{\xi+i(2\bar{N}+1)\pi/\tau:~ \pi/\tau\cot\theta\le \xi\le \sigma\},
  \end{equation}
  \begin{equation}
    \Gamma^{-}=\{\xi-i(2\bar{N}+1)\pi/\tau:~ \pi/\tau\cot\theta\le \xi\le \sigma\}.
  \end{equation}
  As $\widehat{\tilde{U}_{h}}(s)$ in \eqref{eq:bUh} is analytic in the sector $\Sigma_{\theta}$, then using the Cauchy's theorem and the periodic property of exponential function we obtain that
  \begin{equation}\label{eq:812}
    \begin{split}
      &\int_{\sigma-i(2\bar{N}+1)\pi/\tau}^{\sigma+i(2\bar{N}+1)\pi/\tau}
      e^{s t}\omega_2(e^{-s\tau})\widehat{\tilde{U}_{h}}(s)\mathrm{d}s \\
      &=\int_{\Gamma^{-}\cup \Gamma^{+}}e^{s t}\omega_2(e^{-s\tau})\widehat{\tilde{U}_{h}}(s)\mathrm{d}s+\int_{\Gamma_{\varepsilon,\tau}^{\theta}\cup S_{\varepsilon}}e^{s t}\omega_2(e^{-s\tau})\widehat{\tilde{U}_{h}}(s)\mathrm{d}s   \\
      &~~~~+\sum_{\substack{p=-\bar{N} \\ p\ne 0}}^{\bar{N}}\int_{\Gamma_{0,\tau}^{\theta}}e^{(s+i2p\pi/\tau) t}\omega_2(e^{-s\tau})\widehat{\tilde{U}_{h}}(s+i2p\pi/\tau)\mathrm{d}s.   \\
    \end{split}
  \end{equation}
  The first term in the right hand side of \eqref{eq:812} can be estimated as follows
  \begin{align*}
      &\|\int_{\Gamma^{-}\cup\Gamma^{+}}e^{s t}\omega_2(e^{-s\tau})\widehat{\tilde{U}_{h}}(s)\mathrm{d}s\|\\
      &\leq c\Big(\int_{\Gamma^{-}}|e^{st}||\omega_2(e^{-(s+i2\bar{N}\pi/\tau)\tau})|
       |\omega(e^{-(s+i2\bar{N}\pi/\tau)\tau})|^{-\alpha}
       |s|^{-\mu-2}\|g_h\||\mathrm{d}s|\\
      &~~~~~+\int_{\Gamma^{-}}|e^{st}||\omega_2(e^{-(s+i2\bar{N}\pi/\tau)\tau})|^2
       |\omega(e^{-(s+i2\bar{N}\pi/\tau)\tau})|^{-\alpha}\cdot|s|^{\alpha-3}\|u^0\||\mathrm{d}s|\\
      &~~~~~+\int_{\Gamma^{+}}|e^{st}||\omega_2(e^{-(s-i2\bar{N}\pi/\tau)\tau})|
       |\omega(e^{-(s-i2\bar{N}\pi/\tau)\tau})|^{-\alpha}
       |s|^{-\mu-2}\|g_h\||\mathrm{d}s|\\
      &~~~~~+\int_{\Gamma^{+}}|e^{st}||\omega_2(e^{-(s-i2\bar{N}\pi/\tau)\tau})|^2
       |\omega(e^{-(s-i2\bar{N}\pi/\tau)\tau})|^{-\alpha}\cdot|s|^{\alpha-3}\|u^0\||\mathrm{d}s|\Big)\\
      &\leq c\int_{\pi/\tau\cot\theta}^{\sigma}e^{xt}(|\xi|+\pi/\tau)^{1-\alpha}
      |(2\bar{N}+1)\pi/\tau|^{-\mu-2}\|g_h\|\mathrm{d}\xi  \\
      &~~~~~+c\int_{\pi/\tau\cot\theta}^{\sigma}e^{xt}(|\xi|+\pi/\tau)^{2-\alpha}
      |(2\bar{N}+1)\pi/\tau|^{\alpha-3}\|u^0\|\mathrm{d}\xi\\
       &\leq c\Big(\frac{\tau^{\alpha+\mu}\|g_h\|}{(2\bar{N}+1)^{\mu+2}}
       +\frac{\|u^0\|}{(2\bar{N}+1)^{3-\alpha}}\Big)e^{\sigma t},
  \end{align*}
  which tends to zero for $L\to+\infty$ ($\bar{N}\to+\infty$). Therefore, the result \eqref{eq:lemuhn} is obtained from \eqref{eq:bbuh} and \eqref{eq:812}.
\end{proof}
\subsection{Error estimates for nonsingular source terms}
In this subsection, we establish the temporal discrete error estimates of scheme \eqref{eq:CN} by means of Laplace transform for the case of nonsingular source terms in time.
\begin{lemma}\label{lem:herr}
  Let $\bar{E}_{h}(\cdot)$ and $\bar{E}_h^{\tau}(\cdot)$ be given by \eqref{eq:bEh} and \eqref{eq:bEht}, respectively. Then there holds
  \begin{equation}\label{eq:81}
    \|(\bar{E}_h(t)-\bar{E}_h^{\tau}(t))P_hu^0\|\leq c\tau^2 t^{-2}\|u^0\|,\quad \quad \forall~t\in(0,T].
  \end{equation}
\end{lemma}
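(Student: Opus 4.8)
The plan is to represent both operators in \eqref{eq:bEh} and \eqref{eq:bEht} as contour integrals acting on $P_hu^0$ and to compare them termwise. Applying Lemma \ref{lem:uhn} with $g_h\equiv0$ (zero source, nonzero initial data, so that $\tilde u_h=\bar E_h^\tau(t)P_hu^0$) and using the $i2\pi/\tau$-periodicity of $\omega(e^{-s\tau})$ and $\omega_2(e^{-s\tau})$, I would write
\begin{align*}
  \bar E_h^\tau(t)P_hu^0&=\frac{1}{2\pi i}\int_{\Gamma_{\varepsilon,\tau}^\theta\cup S_\varepsilon}e^{st}\omega_2(e^{-s\tau})^2\big(\omega(e^{-s\tau})^\alpha+A_h\big)^{-1}s^{\alpha-3}P_hu^0\,\mathrm{d}s\\
  &\quad+\sum_{p\ne0}\frac{1}{2\pi i}\int_{\Gamma_{0,\tau}^\theta}e^{(s+i2p\pi/\tau)t}\omega_2(e^{-s\tau})^2\big(\omega(e^{-s\tau})^\alpha+A_h\big)^{-1}(s+i2p\pi/\tau)^{\alpha-3}P_hu^0\,\mathrm{d}s.
\end{align*}
On the other hand, since the integrand of $\bar E_h(t)$ in \eqref{eq:bEh} is analytic on $\Sigma_\theta$, I would keep the contour $\Gamma_\varepsilon^\theta\cup S_\varepsilon$ and split its rays at $\rho=\pi/(\tau\sin\theta)$ (where $|\Im s|=\pi/\tau$) into the truncated part $\Gamma_{\varepsilon,\tau}^\theta\cup S_\varepsilon$ (cf. \eqref{eq:gett}) and the tails $R_\tau^\pm=\{\rho e^{\pm i\theta}:\rho>\pi/(\tau\sin\theta)\}$. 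Subtracting, the error decomposes into: (I) the integral over the common contour $\Gamma_{\varepsilon,\tau}^\theta\cup S_\varepsilon$ of the difference of symbols; (II) the tail integral of $\bar E_h$ over $R_\tau^\pm$; and (III) the periodic series. Throughout I would take $\varepsilon=1/t$, which is admissible for $t\ge\tau$ since then $\varepsilon\le\pi/(\tau\sin\theta)$.

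For piece (I), the core estimate is on the symbol difference, which (writing $\omega=\omega(e^{-s\tau})$, $\omega_2=\omega_2(e^{-s\tau})$) I would factor as
\[
  s^{\alpha-1}(s^\alpha+A_h)^{-1}-\omega_2^2(\omega^\alpha+A_h)^{-1}s^{\alpha-3}=s^{\alpha-3}\big[s^2(s^\alpha+A_h)^{-1}-\omega_2^2(\omega^\alpha+A_h)^{-1}\big].
\]
Splitting the bracket as $(s^2-\omega_2^2)(s^\alpha+A_h)^{-1}+\omega_2^2\big[(s^\alpha+A_h)^{-1}-(\omega^\alpha+A_h)^{-1}\big]$ and applying the resolvent identity to the second summand, I would bound each factor on $\Gamma_{\varepsilon,\tau}^\theta\cup S_\varepsilon$ (where $|s\tau|\le\pi/\sin\theta$ is finite and $|\Im s|\le\pi/\tau$) using the discrete resolvent bound $\|(s^\alpha+A_h)^{-1}\|\le c|s|^{-\alpha}$, Lemma \ref{lem:fbdf2err} ($|\omega_2|\le C|s|$ and $|s-\omega_2|\le C\tau^2|s|^3$) and Lemma \ref{lem:fCNerr} ($|s^\alpha-\omega^\alpha|\le C\tau^2|s|^{\alpha+2}$, $|\omega|\sim|s|$). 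Both summands are then $O(\tau^2|s|^{4-\alpha})$, so after the prefactor $|s|^{\alpha-3}$ the whole symbol difference is $O(\tau^2|s|)$. Integrating against $|e^{st}|=e^{\rho t\cos\theta}$ on the rays, with $\int_0^\infty e^{-\rho t|\cos\theta|}\rho\,\mathrm{d}\rho=ct^{-2}$, and using $|e^{st}|\le e$ with arclength $\sim\varepsilon=1/t$ on $S_\varepsilon$, gives $\|\text{(I)}\|\le c\tau^2t^{-2}\|u^0\|$.

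Piece (II) is routine: the integrand of $\bar E_h$ has norm $\le c|s|^{-1}\|u^0\|$, and on $R_\tau^\pm$ one has $\rho\ge\rho_0:=\pi/(\tau\sin\theta)$, so $\rho^{-1}\le\rho_0^{-2}\rho$ and
\[
  \int_{\rho_0}^{\infty}e^{-\rho t|\cos\theta|}\rho^{-1}\,\mathrm{d}\rho\le\rho_0^{-2}\int_0^{\infty}e^{-\rho t|\cos\theta|}\rho\,\mathrm{d}\rho\le c\tau^2t^{-2},
\]
whence $\|\text{(II)}\|\le c\tau^2t^{-2}\|u^0\|$. Piece (III), the periodic series, is the main obstacle, being the genuinely new contribution of the time stepping. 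On $\Gamma_{0,\tau}^\theta$ I would use $|\omega_2|^2\le c|s|^2=c\rho^2$ and $\|(\omega^\alpha+A_h)^{-1}\|\le c|s|^{-\alpha}=c\rho^{-\alpha}$ (the sector condition of Lemma \ref{lem:fCN} guaranteeing the resolvent is well defined), while for $p\ne0$ one has $|s+i2p\pi/\tau|\ge c|p|/\tau$, hence $|s+i2p\pi/\tau|^{\alpha-3}\le c|p|^{\alpha-3}\tau^{3-\alpha}$. The radial integral $\int_0^{\rho_0}e^{\rho t\cos\theta}\rho^{2-\alpha}\,\mathrm{d}\rho\le ct^{-(3-\alpha)}$ is integrable at $\rho=0$, and $\sum_{p\ne0}|p|^{\alpha-3}<\infty$ since $3-\alpha>1$. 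Collecting the factors gives $\|\text{(III)}\|\le c(\tau/t)^{3-\alpha}\|u^0\|$, and since $3-\alpha\ge2$ and $\tau/t\le1$ for $t\ge\tau$, this is $\le c\tau^2t^{-2}\|u^0\|$. Adding the three bounds proves \eqref{eq:81} for $t\ge\tau$; the range $0<t<\tau$ is handled either by the alternative choice $\varepsilon=1/\tau$ (as in Lemma \ref{lem:uhn}) or by observing that $\tau^2t^{-2}\ge1$ there while $\bar E_h$ and $\bar E_h^\tau$ are uniformly bounded. I expect the delicate point to be piece (III): one must exploit the periodicity of $\omega,\omega_2$ to keep $|s|\sim\rho$ on $\Gamma_{0,\tau}^\theta$ while the $p$-decay comes solely from $(s+i2p\pi/\tau)^{\alpha-3}$, and the conversion $(\tau/t)^{3-\alpha}\le(\tau/t)^2$ relies crucially on $\alpha<1$; by contrast, pieces (I) and (II) follow the standard Laplace-transform pattern once Lemmas \ref{lem:fbdf2err}--\ref{lem:fCNerr} are in hand.
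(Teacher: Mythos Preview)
Your proposal is correct and follows essentially the same route as the paper's proof. The paper also invokes Lemma~\ref{lem:uhn} with $f_h\equiv0$, splits the error into the tail piece on $\Gamma_\varepsilon^\theta\setminus\Gamma_{\varepsilon,\tau}^\theta$, the symbol-difference piece on $\Gamma_{\varepsilon,\tau}^\theta\cup S_\varepsilon$, and the periodic series, and bounds each using Lemmas~\ref{lem:fbdf2err}--\ref{lem:fCNerr} and the resolvent identity; the only cosmetic differences are that the paper splits your piece~(I) further into two terms $I_2,I_3$ and, for the periodic series, converts $\rho^{2-\alpha}\le\rho\tau^{\alpha-1}$ directly rather than passing through $(\tau/t)^{3-\alpha}\le(\tau/t)^2$.
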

\begin{proof}
  Let $f_h(t)\equiv0$, it follows from \eqref{eq:uh1}, \eqref{eq:tuh} and Lemma \ref{lem:uhn} that
  \begin{equation}\label{eq:gerror}
    \begin{split}
      &(\bar{E}_h(t)-\bar{E}_h^{\tau}(t))P_hu^0\\
      &=\frac{1}{2\pi i}\int_{\Gamma_{\varepsilon}^{\theta}\backslash\Gamma_{\varepsilon,\tau}^{\theta}}e^{st}
      s\hat{U}_{h}(s)\mathrm{d}s+\frac{1}{2\pi i}\int_{\Gamma_{\varepsilon,\tau}^{\theta}\cup S_{\varepsilon}}e^{s t}\big(s-\omega_2(e^{-s\tau})\big)\hat{U}_{h}(s)\mathrm{d}s\\
      &~~~~~+\frac{1}{2\pi i}\int_{\Gamma_{\varepsilon,\tau}^{\theta}\cup S_{\varepsilon}}e^{s t}\omega_2(e^{-s\tau})\Big(\hat{U}_{h}(s)-\widehat{\tilde{U}_{h}}(s)\Big)\mathrm{d}s\\
      &~~~~~-\sum_{\substack{p=-\infty \\ p\ne 0}}^{+\infty}\frac{1}{2\pi i}\int_{\Gamma_{0,\tau}^{\theta}}e^{(s+i2\pi p/\tau) t}\omega_{2}(e^{-s\tau})
                       \widehat{\tilde{U}_{h}}(s+i2\pi p/\tau)\mathrm{d}s \\
                       &=I_{1}+I_{2}+I_{3}+I_{4},
    \end{split}
  \end{equation}
  where $\hat{U}_{h}(s)$ and $\widehat{\tilde{U}_{h}}(s)$ are given respectively by \eqref{eq:hatUh} and \eqref{eq:bUh} with $\hat{f}_h(s)\equiv 0$. By \eqref{eq:hUhs}, the estimation of the first item $I_{1}$ in \eqref{eq:gerror} is as follows
  \begin{equation*}
    \begin{split}
      \|I_{1}\|&\leq c \|u^0\|\int_{\Gamma_{\varepsilon}^{\theta}\backslash\Gamma_{\varepsilon,\tau}^{\theta}}
      |e^{st}||s|^{-1}|\mathrm{d}s|
         \leq c\|u^0\|\int_{\frac{\pi}{\tau\sin\theta}}^{+\infty}
      e^{\rho t\cos\theta}\rho^{-1}\mathrm{d}\rho\\
         &\leq c\tau^2\|u^0\|\int_{\frac{\pi}{\tau\sin\theta}}^{+\infty}
      e^{\rho t\cos\theta}\rho\mathrm{d}\rho
          \leq c \tau^2 t^{-2}\|u^0\|.
    \end{split}
  \end{equation*}
  Let $\varepsilon=t^{-1}$ when $t\ge\tau$ and $\varepsilon=\tau^{-1}$ when $0<t\le\tau$, then $\varepsilon\le t^{-1}$. It follows from Lemma \ref{lem:fbdf2err} and \eqref{eq:hUhs} that
  \begin{equation}
    \begin{split}
      \|I_{2}\|&\leq c\tau^2\|u^0\|\int_{\Gamma_{\varepsilon, \tau}^{\theta}\cup S_{\varepsilon}}|e^{st}||s||\mathrm{d}s| \\
        &\leq c\tau^2\|u^0\|\left(\int_{\varepsilon}^{\frac{\pi}{\tau\sin\theta}}e^{\rho t\cos\theta}\rho\mathrm{d}\rho+\int_{-\theta}^{\theta}e^{\varepsilon t\cos\xi}\varepsilon^{2}\mathrm{d}\xi\right) \\
        &\leq c \tau^2 t^{-2}\|u^0\|.
    \end{split}
  \end{equation}
  Since the following estimate
  \begin{equation}\label{eq:dcdresolv}
    \begin{split}
      &\|(s^{\alpha}+A_{h})^{-1}
      -\left(\omega(e^{-s\tau})^{\alpha}+A_{h}\right)^{-1}\|\\
      &\leq \|\left(\omega(e^{-s\tau})^{\alpha}+A_{h}\right)^{-1}\|
      \|\omega(e^{-s\tau})^{\alpha}-s^{\alpha}\| \|\left(s^{\alpha}+A_{h}\right)^{-1}\| \\
      &\leq c\tau^2|s|^2|\omega(e^{-s\tau})|^{-\alpha}
    \end{split}
  \end{equation}
  holds by using Lemmas \ref{lem:fCN} and \ref{lem:fCNerr} for $s$ enclosed by curves $\Gamma_{0,\tau}^{\theta}$, $\Im(s)=\pm \pi/\tau$ and $\Gamma$, it arrives at
  \begin{equation}\label{eq:resolv2}
    \begin{split}
      &\|(s^{\alpha}+A_{h})^{-1}s
       -\left(\omega(e^{-s\tau})^{\alpha}+A_{h}\right)^{-1}\omega_{2}(e^{-s\tau})\|\\
      &\le\|(s^{\alpha}+A_{h})^{-1}(s-\omega_{2}(e^{-s\tau}))\|+\|\big((s^{\alpha}+A_{h})^{-1}
       -\left(\omega(e^{-s\tau})^{\alpha}+A_{h}\right)^{-1}\big)\omega_{2}(e^{-s\tau})\|\\
      &\leq c\tau^{2}(|s|^{3-\alpha}+|s|^2|\omega(e^{-s\tau})|^{-\alpha}|\omega_{2}(e^{-s\tau})|),
    \end{split}
  \end{equation}
  for $s$ enclosed by curves $\Gamma_{0,\tau}^{\theta}$, $\Im(s)=\pm \pi/\tau$ and $\Gamma$.
  With \eqref{eq:hatUh}, \eqref{eq:bUh}, \eqref{eq:resolv2}, Lemmas \ref{lem:fbdf2err} and \ref{lem:fCNerr}, it yields the estimate of $I_{3}$ in \eqref{eq:gerror} as follows
  \begin{equation}
    \begin{split}
      \|I_{3}\|&\leq c\tau^2\|u^0\|\int_{\Gamma_{\varepsilon,\tau}^{\theta}\cup S_{\varepsilon}}|e^{s t}||\omega_2(e^{-s\tau})|\big(|s|^{3-\alpha}+|\omega(e^{-s\tau})|^{-\alpha}|\omega_2(e^{-s\tau})||s|^{2}\big)|s|^{\alpha-3}|\mathrm{d}s|  \\
          &\leq c\tau^2\|u^0\|\left(\int_{\varepsilon}^{\frac{\pi}{\tau\sin\theta}}e^{\rho t\cos\theta }\rho\mathrm{d}\rho+\int_{-\theta}^{\theta}e^{ \varepsilon t\cos\xi}\varepsilon^{2}\mathrm{d}\xi\right) \\
      &\leq c \tau^2 t^{-2}\|u^0\|.
    \end{split}
  \end{equation}
  In addition, from the inequality for any $\nu>-1$
  \begin{equation}
    \sum_{p=1}^{+\infty}p^{-\nu-2}\leq 1+\int_{1}^{+\infty}p^{-\nu-2}\mathrm{d}p\leq 1+\frac{1}{1+\nu},
  \end{equation}
  it follows that the fourth item $I_{4}$ in \eqref{eq:gerror} satisfies
  \begin{equation*}
    \begin{split}
      \|I_{4}\|&\leq c\|u^0\|\sum_{p=1}^{+\infty}\int_{\Gamma_{0,\tau}^{\theta}}|e^{st}||\omega_2(e^{-s\tau})|^{2}|\omega(e^{-s\tau})|^{-\alpha}|s+i2p\pi/\tau|^{\alpha-3}|\mathrm{d}s|
                  \\
                 &\leq c \tau^{2}\|u^0\|\sum_{p=1}^{+\infty}p^{\alpha-3}\int_{0}^{\frac{\pi}{\tau\sin\theta}}e^{ \rho t\cos\theta}\rho\mathrm{d}\rho   \\
                 &\leq c \tau^{2}t^{-2}\|u^0\|,
    \end{split}
  \end{equation*}
  where $\rho^{2-\alpha}\le\rho\tau^{\alpha-1}$ is applied as $0<\alpha\le 1$ and $\rho\in(0,\frac{\pi}{\tau\sin\theta})$.
  Therefore, the result \eqref{eq:81} is obtained.
\end{proof}

Note that the solution $\tilde{u}_{h}^n$ of \eqref{eq:CN} satisfies $\tilde{u}_{h}^n=\tilde{u}_{h}(t_n)$ with $\tilde{u}_{h}(t)$ given by \eqref{eq:tuh} or \eqref{eq:buhE}. Then the result in Lemma \ref{lem:herr} directly implies the error estimate of scheme \eqref{eq:CN} for solving \eqref{eq:tfdesub} of homogenous case. The result is stated in the following theorem.

\begin{theorem}\label{thm:herr}
  Assume $u^0(x)\in L^2(\Omega)$ and $f(x,t)\equiv0$. Let $u_{h}(t)$ and $\tilde{u}_{h}^n$ be the solutions of\eqref{eq:sFEO} and \eqref{eq:CN}, respectively. Then we have
  \begin{equation}\label{eq:thmherr}
    \|u_{h}(t_n)-\tilde{u}_{h}^n\|\leq c\tau^2 t_n^{-2}\|u^0\|.
  \end{equation}
\end{theorem}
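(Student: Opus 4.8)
The plan is to recognize that, for the homogeneous problem, this theorem is an immediate consequence of Lemma \ref{lem:herr}, so the only real work is to set up the correct solution representations and identify the difference $u_h(t_n)-\tilde{u}_h^n$ with the operator difference already estimated there. The heavy lifting (contour splitting into $\Gamma_{\varepsilon}^{\theta}\backslash\Gamma_{\varepsilon,\tau}^{\theta}$, $\Gamma_{\varepsilon,\tau}^{\theta}\cup S_{\varepsilon}$, and the periodic copies along $\Gamma_{0,\tau}^{\theta}$, together with the resolvent comparison \eqref{eq:resolv2} and the generating-function bounds in Lemmas \ref{lem:fbdf2err} and \ref{lem:fCNerr}) has been carried out in Lemma \ref{lem:herr}; there is essentially no new analytical difficulty at the level of the theorem.

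Concretely, I would first set $f(x,t)\equiv 0$ so that the continuous semidiscrete representation \eqref{eq:uhE} collapses to $u_h(t)=\bar{E}_h(t)u_h(0)$ and the fully discrete representation \eqref{eq:buhE} collapses to $\tilde{u}_h(t)=\bar{E}_h^{\tau}(t)u_h(0)$, both with $u_h(0)=P_hu^0$ by the initialization in \eqref{eq:sFEO} and \eqref{eq:CN}. Next I would invoke the observation stated just before the theorem, namely that the scheme value satisfies $\tilde{u}_h^n=\tilde{u}_h(t_n)$ with $\tilde{u}_h(t)$ given by \eqref{eq:tuh}/\eqref{eq:buhE}; this is precisely what makes the continuous-in-time operator $\bar{E}_h^{\tau}$ the right object for comparing against $\bar{E}_h$ at the grid points.

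Subtracting the two representations at $t=t_n$ then gives the clean identity
\begin{equation*}
  u_h(t_n)-\tilde{u}_h^n
  =\bar{E}_h(t_n)P_hu^0-\bar{E}_h^{\tau}(t_n)P_hu^0
  =\big(\bar{E}_h(t_n)-\bar{E}_h^{\tau}(t_n)\big)P_hu^0 .
\end{equation*}
Applying Lemma \ref{lem:herr} with the admissible choice $t=t_n\in(0,T]$ yields the bound $\|(\bar{E}_h(t_n)-\bar{E}_h^{\tau}(t_n))P_hu^0\|\leq c\tau^2 t_n^{-2}\|u^0\|$, which is exactly the claimed estimate \eqref{eq:thmherr}.

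The step I expect to require the most care is not an estimate but the bookkeeping that legitimizes the identity above: one must make sure that the discrete solution $\tilde{u}_h^n$ produced by \eqref{eq:CN} genuinely coincides with the sampled continuous-in-time function $\tilde{u}_h(t_n)$ whose Laplace representation underlies $\bar{E}_h^{\tau}$, and that both solution operators act on the same initial datum $P_hu^0$. Once this identification is in place, the theorem follows verbatim from Lemma \ref{lem:herr}, and no further contour manipulation or resolvent estimate is needed.
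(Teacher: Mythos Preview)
Your proposal is correct and matches the paper's approach exactly: the paper also observes that $\tilde{u}_h^n=\tilde{u}_h(t_n)$ with $\tilde{u}_h(t)$ given by \eqref{eq:buhE}, so that for $f\equiv0$ the error reduces to $(\bar{E}_h(t_n)-\bar{E}_h^{\tau}(t_n))P_hu^0$, and then Lemma~\ref{lem:herr} gives \eqref{eq:thmherr} directly. No additional argument is supplied or needed.
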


Next we consider the error estimate for the inhomogeneous case. The Taylor expansion of $f_h(t)$ is of the form
\begin{equation}\label{eq:TeFh}
  f_h(t)=f_h(0)+tf_h'(0)+t*f_h''(t),
\end{equation}
where ``$*$" represents the convolution operation.
Thus, we need to obtain the following estimate for the case $t^{\mu}g_h(x)$ at first. The proof is analogous to that of Lemma \ref{lem:herr}.
\begin{lemma}\label{lem:inherr}
  Let $f_h(t)=t^{\mu}g_h(x)$ with $\mu\ge 0$, $E_{h}(\cdot)$ and $E_h^{\tau}(\cdot)$ be the operators given by \eqref{eq:Eh} and \eqref{eq:Eht}, respectively. Then we obtain
  \begin{equation}\label{eq:82}
    \|\left(E_{h}-E_h^{\tau}\right)*f_h(t)\|\leq c\tau^2 t^{\alpha+\mu-2}\|g_h\|,\quad\quad \forall~t\in(0,T].
  \end{equation}
\end{lemma}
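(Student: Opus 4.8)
The plan is to follow the four-term splitting used for Lemma~\ref{lem:herr}, now with homogeneous initial data and the source driving the error. First I would set $u^0=0$, so that $(E_h-E_h^\tau)*f_h(t)=u_h(t)-\tilde u_h(t)$, where $u_h$ is represented through \eqref{eq:uh1}--\eqref{eq:hatUh} and $\tilde u_h$ through Lemma~\ref{lem:uhn} (applicable since $f_h=t^\mu g_h$ with $\mu\ge 0>-1$). Splitting the contour $\Gamma_\varepsilon^\theta$ of $u_h$ into the truncated piece $\Gamma_{\varepsilon,\tau}^\theta$ and its tail, and writing $s\hat U_h-\omega_2(e^{-s\tau})\widehat{\tilde U_h}=(s-\omega_2(e^{-s\tau}))\hat U_h+\omega_2(e^{-s\tau})(\hat U_h-\widehat{\tilde U_h})$ on $\Gamma_{\varepsilon,\tau}^\theta\cup S_\varepsilon$, I obtain exactly the decomposition $I_1+I_2+I_3+I_4$ of \eqref{eq:gerror}, where $I_4$ again collects the periodic copies over $\Gamma_{0,\tau}^\theta$.

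The only difference from Lemma~\ref{lem:herr} lies in the frequency decay. With $u^0=0$ and $\|\hat f_h(s)\|\le c|s|^{-\mu-1}\|g_h\|$, estimates \eqref{eq:hUhs} and \eqref{eq:htUhs} (see \eqref{eq:Fs}) give $\|\hat U_h(s)\|\le c|s|^{-\alpha-\mu-2}\|g_h\|$ and $\|\widehat{\tilde U_h}(s)\|\le c|\omega(e^{-s\tau})|^{-\alpha}|s|^{-\mu-2}\|g_h\|$, i.e.\ an extra factor $|s|^{-\alpha-\mu}$ compared with the homogeneous $|s|^{-2}$. Carrying this factor through, together with $|s-\omega_2(e^{-s\tau})|\le c\tau^2|s|^3$ from Lemma~\ref{lem:fbdf2err}, the resolvent bound \eqref{eq:dcdresolv}--\eqref{eq:resolv2}, and $C_0|s|\le|\omega(e^{-s\tau})|\le C_1|s|$ from Lemma~\ref{lem:fCNerr}, each integrand in $I_1,I_2,I_3$ is bounded by $c\tau^2e^{\rho t\cos\theta}\rho^{1-\alpha-\mu}\|g_h\|$, while the $S_\varepsilon$ contributions reduce to $c\tau^2\varepsilon^{2-\alpha-\mu}\|g_h\|$ times a bounded angular integral. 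Choosing $\varepsilon=t^{-1}$ for $t\ge\tau$ and $\varepsilon=\tau^{-1}$ for $0<t\le\tau$ as before, and using $\int_0^\infty e^{\rho t\cos\theta}\rho^{1-\alpha-\mu}\,\mathrm{d}\rho=ct^{\alpha+\mu-2}$ (which converges for $\alpha+\mu<2$), gives $\|I_1\|+\|I_2\|+\|I_3\|\le c\tau^2 t^{\alpha+\mu-2}\|g_h\|$; for $I_1$ one first uses $\rho^{-\alpha-\mu-1}\le c\tau^2\rho^{1-\alpha-\mu}$ on the tail where $\rho\ge\pi/(\tau\sin\theta)$.

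The main obstacle is $I_4$, the periodic tail. By the $2\pi i/\tau$-periodicity of $z\mapsto\omega(e^{-z\tau})$ and $z\mapsto\omega_2(e^{-z\tau})$, the shifted factors are unchanged; on $\Gamma_{0,\tau}^\theta$ one has $|s+i2p\pi/\tau|\ge cp/\tau$ for $p\ne0$, whence $\|\widehat{\tilde U_h}(s+i2p\pi/\tau)\|\le c\rho^{-\alpha}\tau^{\mu+2}p^{-\mu-2}\|g_h\|$, and including the factor $|\omega_2(e^{-s\tau})|\le c\rho$ the $p$-th integrand is bounded by $ce^{\rho t\cos\theta}\rho^{1-\alpha}\tau^{\mu+2}p^{-\mu-2}\|g_h\|$. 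Since $\sum_{p\ge1}p^{-\mu-2}<\infty$ for $\mu\ge0$, summing over $p$ and integrating $\int_0^{\pi/(\tau\sin\theta)}e^{\rho t\cos\theta}\rho^{1-\alpha}\,\mathrm{d}\rho$ produces a bound $c\tau^{\mu+2}t^{\alpha-2}\|g_h\|$ when $t\ge\tau$ and $c\tau^{\alpha+\mu}\|g_h\|$ when $0<t\le\tau$. The delicate point is to reconcile these with the target $c\tau^2 t^{\alpha+\mu-2}\|g_h\|$: for $t\ge\tau$ this uses $\tau^\mu\le t^\mu$ (valid since $t\ge\tau$ and $\mu\ge0$), and for $0<t\le\tau$ it uses $\tau^{\alpha+\mu}\le\tau^2 t^{\alpha+\mu-2}$ (valid since $\alpha+\mu-2<0$). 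Tracking this interplay of powers of $\tau$ and $t$ across the two regimes is the only genuinely new bookkeeping relative to Lemma~\ref{lem:herr}; with it in place, collecting the bounds on $I_1$ through $I_4$ yields \eqref{eq:82}.
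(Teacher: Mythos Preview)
Your proposal is correct and follows essentially the same route as the paper's proof: the same four-term splitting $I_1+I_2+I_3+I_4$ inherited from Lemma~\ref{lem:herr}, with the extra decay $|s|^{-\alpha-\mu}$ coming from $\hat f_h(s)=\Gamma(1+\mu)s^{-\mu-1}g_h$. The only cosmetic difference is in handling $I_4$: the paper absorbs the $\tau^{\mu}$ by writing $\rho^{1-\alpha}\le c\rho^{1-\alpha-\mu}\tau^{-\mu}$ on $\Gamma_{0,\tau}^\theta$ (so that all four integrands share the common exponent $1-\alpha-\mu$ and one integrates once), whereas you keep $\rho^{1-\alpha}$ and recover the target by the case split $t\ge\tau$ versus $0<t\le\tau$; the two bookkeeping devices are equivalent and both tacitly use $\alpha+\mu<2$.
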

\begin{proof}
  With $u^0(x)\equiv0$ and $\hat{f_h}(s)=\Gamma(1+\mu)s^{-\mu-1}g_h(x)$, it follows from \eqref{eq:hatUh}, \eqref{eq:bUh} and \eqref{eq:dcdresolv} that
\begin{equation*}
    \|\widehat{U}_{h}(s)-\widehat{\tilde{U}_{h}}(s)\|\le c\tau^2|\omega(e^{-s\tau})|^{-\alpha}|s|^{-\mu}\|g_h\|.
\end{equation*}
  Then we have from \eqref{eq:uh1}, \eqref{eq:hUhs}, Lemma \ref{lem:fCNerr} and Lemma \ref{lem:uhn} that
  \begin{align*}
      \|\big(E_{h}(t)-E_h^{\tau}(t)\big)*f_h(t)\|&\le c\int_{\Gamma_{\varepsilon}^{\theta}\backslash\Gamma_{\varepsilon,\tau}^{\theta}}
      |e^{st}|
      |s|\|\hat{U}_{h}(s)\||\mathrm{d}s|\\
      &~~~~~+c\int_{\Gamma_{\varepsilon,\tau}^{\theta}\cup S_{\varepsilon}}
      |e^{s t}||s-\omega_2(e^{-s\tau})|\|\hat{U}_{h}(s)\||\mathrm{d}s|\\
      &~~~~~+c\int_{\Gamma_{\varepsilon,\tau}^{\theta}\cup S_{\varepsilon}}
      |e^{st}||\omega_2(e^{-s\tau})|\|\hat{U}_{h}(s)-\widehat{\tilde{U}_{h}}(s)\||\mathrm{d}s|\\
      &~~~~~+\sum_{\substack{p=-\infty \\ p\ne 0}}^{+\infty}c\int_{\Gamma_{0,\tau}^{\theta}}|e^{(s+i2\pi p/\tau) t}||\omega_{2}(e^{-s\tau})|\|\widehat{\tilde{U}_{h}}(s+i2\pi p/\tau)\||\mathrm{d}s| \\
      &\le c\tau^2\|g_h\|\int_{\frac{\pi}{\tau\sin\theta}}^{+\infty}
      e^{\cos\theta \rho t}\rho^{1-\alpha-\mu}\mathrm{d}\rho\\
      &~~~~~+c\tau^2\|g_h\|\left(\int_{\varepsilon}^{\frac{\pi}{\tau\sin\theta}}e^{\rho \cos\theta t}\rho^{1-\alpha-\mu}\mathrm{d}\rho+\int_{-\theta}^{\theta}e^{ \varepsilon t\cos\xi}\varepsilon^{2-\alpha-\mu}\mathrm{d}\xi\right)\\
      &~~~~~+c\tau^{2}\|g_h\|\sum_{p=1}^{+\infty}p^{-\mu-2}\int_{0}^{\frac{\pi}{\tau\sin\theta}}e^{ \rho t\cos\theta}\rho^{1-\alpha-\mu}\mathrm{d}\rho\\
      &\le c\tau^2t^{\alpha+\mu-2}\|g_h\|,
  \end{align*}
  where $\rho^{1-\alpha}\le c\rho^{1-\alpha-\mu}\tau^{-\mu}$ is applied as $\mu\ge0$ and $\rho\in(0,\frac{\pi}{\tau\sin\theta})$. The result \eqref{eq:82} is obtained.
\end{proof}

\begin{theorem}\label{thm:errCN}
  Assume $u^0(x)\in L^2(\Omega)$, $f\in W^{1,\infty}(0,T;L^2(\Omega))$ and $\int_{0}^t(t-\zeta)^{\alpha-1}\|f''(\zeta)\|\mathrm{d}\zeta<\infty$. Let $u_{h}$ and $\tilde{u}_{h}^n$ be the solutions of \eqref{eq:sFEO} and the scheme \eqref{eq:CN}, respectively. Then for $1\le n\le N$, it holds that
  \begin{equation}\label{eq:thmerrCN}
    \|u_{h}(t_{n})-\tilde{u}_{h}^{n}\|\leq c\tau^2 \Big(t_{n}^{-2}\|u^0\| +t_{n}^{\alpha-2}\|f(0)\|+t_{n}^{\alpha-1}\|f'(0)\|+\int_{0}^{t_n}(t_n-\zeta)^{\alpha-1}\|f''(\zeta)\|\mathrm{d}\zeta\Big).
  \end{equation}
\end{theorem}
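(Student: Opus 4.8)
The plan is to start from the two solution representations already in hand: the semidiscrete solution \eqref{eq:uhE}, namely $u_h(t)=\bar E_h(t)u_h(0)+\int_0^tE_h(t-s)f_h(s)\,\mathrm{d}s$, and the fully discrete solution \eqref{eq:buhE}, namely $\tilde u_h(t)=\bar E_h^\tau(t)u_h(0)+\int_0^tE_h^\tau(t-\zeta)f_h(\zeta)\,\mathrm{d}\zeta$, evaluated at $t=t_n$ (recall $\tilde u_h^n=\tilde u_h(t_n)$). Subtracting these, the error splits cleanly into a homogeneous piece and an inhomogeneous convolution piece,
\begin{equation*}
  u_h(t_n)-\tilde u_h^n=\big(\bar E_h(t_n)-\bar E_h^\tau(t_n)\big)P_hu^0+\int_0^{t_n}\big(E_h-E_h^\tau\big)(t_n-s)f_h(s)\,\mathrm{d}s.
\end{equation*}
The first term is exactly controlled by Theorem \ref{thm:herr} (equivalently Lemma \ref{lem:herr}), which yields the contribution $c\tau^2t_n^{-2}\|u^0\|$. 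So the whole task reduces to estimating the convolution term $\big(E_h-E_h^\tau\big)*f_h(t_n)$ under the stated regularity of $f$.

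For that term I would invoke the Taylor expansion \eqref{eq:TeFh}, $f_h(t)=f_h(0)+tf_h'(0)+t*f_h''(t)$, where $P_h$ commutes with time differentiation so that $f_h^{(k)}=P_hf^{(k)}$ and $\|f_h^{(k)}(\cdot)\|\le\|f^{(k)}(\cdot)\|$. Substituting this into the convolution and using linearity splits it into three pieces. The constant piece $\big(E_h-E_h^\tau\big)*f_h(0)$ is precisely the $\mu=0$ instance of Lemma \ref{lem:inherr} (with $g_h=f_h(0)$), giving $c\tau^2t_n^{\alpha-2}\|f_h(0)\|$; the linear piece $\big(E_h-E_h^\tau\big)*\big(tf_h'(0)\big)$ is the $\mu=1$ instance (with $g_h=f_h'(0)$), giving $c\tau^2t_n^{\alpha-1}\|f_h'(0)\|$. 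Both match the middle two terms of \eqref{eq:thmerrCN} after bounding $\|f_h^{(k)}(0)\|\le\|f^{(k)}(0)\|$.

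The remainder piece $\big(E_h-E_h^\tau\big)*\big(t*f_h''(t)\big)$ is where the only real work lies. Here I would use associativity of the convolution to regroup it as $\big((E_h-E_h^\tau)*t\big)*f_h''$, i.e. by Fubini rewrite
\begin{equation*}
  \int_0^{t_n}\!\!\big(E_h-E_h^\tau\big)(t_n-s)\Big(\int_0^s(s-\zeta)f_h''(\zeta)\,\mathrm{d}\zeta\Big)\mathrm{d}s=\int_0^{t_n}\!\!\big((E_h-E_h^\tau)*t\big)(t_n-\zeta)\,f_h''(\zeta)\,\mathrm{d}\zeta.
\end{equation*}
Applying Lemma \ref{lem:inherr} once more, now with $\mu=1$ and $g_h=1$ to the inner kernel, gives $\|\big((E_h-E_h^\tau)*t\big)(t_n-\zeta)\|\le c\tau^2(t_n-\zeta)^{\alpha-1}$, whence the remainder is bounded by $c\tau^2\int_0^{t_n}(t_n-\zeta)^{\alpha-1}\|f_h''(\zeta)\|\,\mathrm{d}\zeta$, the last term of \eqref{eq:thmerrCN}. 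Collecting the four contributions and using $\|f_h''(\cdot)\|\le\|f''(\cdot)\|$ delivers the claim. The step I expect to be the main obstacle is precisely this remainder: justifying the interchange of the two convolutions (Fubini) and confirming that applying the pointwise kernel bound from Lemma \ref{lem:inherr} under the integral is legitimate and produces a finite quantity. This is exactly where the hypothesis $\int_0^t(t_n-\zeta)^{\alpha-1}\|f''(\zeta)\|\,\mathrm{d}\zeta<\infty$ is needed, guaranteeing absolute convergence of the regrouped integral so that the manipulation and the final estimate are valid.
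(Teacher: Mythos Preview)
Your proposal is correct and follows essentially the same approach as the paper: subtract the representations \eqref{eq:uhE} and \eqref{eq:buhE}, handle the homogeneous part by Lemma~\ref{lem:herr}, expand $f_h$ via \eqref{eq:TeFh}, and apply Lemma~\ref{lem:inherr} with $\mu=0$, $\mu=1$, and (after regrouping the convolution) $\mu=1$ under the integral for the remainder. The paper's proof is in fact terser than yours---it simply writes the four-term splitting \eqref{eq:errCN1} and cites Lemmas~\ref{lem:herr} and~\ref{lem:inherr}---so your explicit Fubini justification for the remainder term is a welcome elaboration rather than a deviation.
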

\begin{proof}
 It follows from \eqref{eq:uhE}, \eqref{eq:buhE} and \eqref{eq:TeFh} that
  \begin{equation}\label{eq:errCN1}
    \begin{aligned}
      \|u_{h}(t_{n})-\tilde{u}_{h}^{n}\|&\leq \|\left(\bar{E}_h(t_n)-\bar{E}_h^{\tau}(t_n)\right)P_hu^0\|
      +\|\big((E_{h}(t)-E_h^{\tau}(t))*1\big)(t_n)f_h(0)\|\\
      &~~~~~+\|\big((E_{h}(t)-E_h^{\tau}(t))*t\big)(t_n)f_h'(0)\|\\
      &~~~~~+\|\big((E_{h}(t)-E_h^{\tau}(t))*t*f_h''(t)\big)(t_n)\|.
    \end{aligned}
  \end{equation}
  Then \eqref{eq:thmerrCN} can be derived from \eqref{eq:errCN1} by Lemmas \ref{lem:herr} and \ref{lem:inherr}.
\end{proof}

In the following, the error estimates of scheme \eqref{eq:CN} for the source terms in the forms of $f(x,t)=t^{\mu}*g(x,t)$ with $\mu>-1$ and $f(x,t)=t^{\mu}g(x,t)$ with $\mu>0$ are established in Theorems \ref{thm:errCNcst} and \ref{thm:errCNdst}, respectively. The proofs are based on Lemmas \ref{lem:herr} and \ref{lem:inherr}.

\begin{theorem}[Source terms $f=t^{\mu}*g(t)$]\label{thm:errCNcst}
  Assume that $u^0(x)\in L^2(\Omega)$, and $f(t)=t^{\mu}*g(t)$ with $\mu>-1$ satisfying $g\in W^{1,\infty}(0,T;L^2(\Omega))$ and $\int_{0}^t(t-\zeta)^{\min(\alpha+\mu,0)}\|g''(\zeta)\|\mathrm{d}\zeta<\infty$. Let $u_{h}$ and $\tilde{u}_{h}^n$ be the solutions of \eqref{eq:sFEO} and the scheme \eqref{eq:CN}, respectively. Then for $1\le n\le N$, it holds that
  \begin{equation}\label{eq:thmerrCNcst}
    \|u_{h}(t_{n})-\tilde{u}_{h}^{n}\|\leq c\tau^2 \Big(t_{n}^{-2}\|u^0\| +t_{n}^{\alpha+\mu-1}\|g(0)\|+t_{n}^{\alpha+\mu}\|g'(0)\|+\int_{0}^{t_n}(t_n-\zeta)^{\alpha+\mu}\|g''(\zeta)\|\big)\mathrm{d}\zeta\Big).
  \end{equation}
\end{theorem}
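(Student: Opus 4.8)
The plan is to follow exactly the architecture of Theorem \ref{thm:errCN}: split the total error into an initial-data part governed by Lemma \ref{lem:herr} and a source part governed by Lemma \ref{lem:inherr}, with the one genuinely new task being to unwind the convolution structure $f=t^{\mu}*g$ into a combination of pure powers and a remainder before Lemma \ref{lem:inherr} can be invoked. From the solution representations \eqref{eq:uhE} and \eqref{eq:buhE}, the error decomposes as
\[
u_h(t_n)-\tilde{u}_h^n=\big(\bar{E}_h(t_n)-\bar{E}_h^{\tau}(t_n)\big)P_hu^0+\big((E_h-E_h^{\tau})*f_h\big)(t_n),
\]
and the first summand is immediately controlled by $c\tau^2 t_n^{-2}\|u^0\|$ through Lemma \ref{lem:herr}, supplying the first term of \eqref{eq:thmerrCNcst}.

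For the source part I would Taylor-expand $g_h$ rather than $f_h$, writing $g_h(t)=g_h(0)+tg_h'(0)+(t*g_h'')(t)$, which is legitimate under $g\in W^{1,\infty}(0,T;L^2(\Omega))$ together with the stated integrability of $g''$. Convolving with $t^{\mu}$ and evaluating the elementary convolutions via their Laplace transforms (for instance $\widehat{t^{\mu}*1}=\Gamma(\mu+1)s^{-\mu-2}$ and $\widehat{t^{\mu}*t}=\Gamma(\mu+1)\Gamma(2)s^{-\mu-3}$) gives
\[
f_h=t^{\mu}*g_h=\tfrac{1}{\mu+1}t^{\mu+1}g_h(0)+\tfrac{\Gamma(\mu+1)}{\Gamma(\mu+3)}t^{\mu+2}g_h'(0)+\tfrac{\Gamma(\mu+1)}{\Gamma(\mu+3)}\big(t^{\mu+2}*g_h''\big).
\]
Since $\mu>-1$, the exponents $\mu+1$ and $\mu+2$ are nonnegative, so Lemma \ref{lem:inherr} applies directly to the first two terms and yields bounds $c\tau^2 t_n^{\alpha+\mu-1}\|g_h(0)\|$ and $c\tau^2 t_n^{\alpha+\mu}\|g_h'(0)\|$; using the stability $\|P_h\cdot\|\le\|\cdot\|$ to replace $g_h$ by $g$ reproduces the second and third terms of \eqref{eq:thmerrCNcst}.

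The main obstacle is the third, doubly convolved term $(E_h-E_h^{\tau})*(t^{\mu+2}*g_h'')$. Here I would use associativity of convolution to regroup it as $\big((E_h-E_h^{\tau})*t^{\mu+2}\big)*g_h''$, apply Lemma \ref{lem:inherr} to the inner factor (exponent $\mu+2\ge 0$) to obtain the operator-norm estimate $\|\big((E_h-E_h^{\tau})*t^{\mu+2}\big)(t)\|\le c\tau^2 t^{\alpha+\mu}$, and then bound the outer convolution by
\[
\Big\|\int_0^{t_n}\big((E_h-E_h^{\tau})*t^{\mu+2}\big)(t_n-\zeta)\,g_h''(\zeta)\,\mathrm{d}\zeta\Big\|\le c\tau^2\int_0^{t_n}(t_n-\zeta)^{\alpha+\mu}\|g_h''(\zeta)\|\,\mathrm{d}\zeta.
\]
The delicate point is the finiteness of this last integral when $\alpha+\mu<0$, where the kernel is singular at $\zeta=t_n$; this is precisely what the hypothesis $\int_0^t(t-\zeta)^{\min(\alpha+\mu,0)}\|g''(\zeta)\|\mathrm{d}\zeta<\infty$ guarantees, whereas for $\alpha+\mu\ge 0$ one simply uses $(t_n-\zeta)^{\alpha+\mu}\le t_n^{\alpha+\mu}$ together with the $W^{1,\infty}$ bound on $g$. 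Collecting the four contributions then gives \eqref{eq:thmerrCNcst}.
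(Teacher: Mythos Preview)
Your proposal is correct and follows essentially the same route as the paper: decompose the error via \eqref{eq:uhE}--\eqref{eq:buhE}, Taylor-expand $g_h$, push the convolution with $t^{\mu}$ through to obtain the three-term expansion $f_h=\frac{1}{\mu+1}t^{\mu+1}g_h(0)+\frac{1}{(\mu+1)(\mu+2)}t^{\mu+2}g_h'(0)+\frac{1}{(\mu+1)(\mu+2)}t^{\mu+2}*g_h''$ (your constant $\Gamma(\mu+1)/\Gamma(\mu+3)$ equals $\frac{1}{(\mu+1)(\mu+2)}$), and then apply Lemmas \ref{lem:herr} and \ref{lem:inherr} termwise, using associativity for the remainder. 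The only minor slip is that in the case $\alpha+\mu\ge 0$ the finiteness of $\int_0^{t_n}\|g''(\zeta)\|\,\mathrm{d}\zeta$ comes from the stated hypothesis on $g''$ (with $\min(\alpha+\mu,0)=0$), not from $g\in W^{1,\infty}$.
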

\begin{proof}
  In view of $g\in W^{1,\infty}(0,T;L^2(\Omega))$ and $g''\in L^{1}(0,T;L^2(\Omega))$, we have the Taylor expansion of $g(t)$ as follows
  \begin{equation}\label{eq:TEgt}
    g(t)=g(0)+tg'(0)+t*g''(t).
  \end{equation}
  By the identity $t^{\mu}*t=\frac{t^{\mu+2}}{(\mu+1)(\mu+2)}$,  $f_h(t)=t^{\mu}*g_h(t)$ can be reformulated as
  \begin{equation}
   f_h(t)=\frac{t^{\mu+1}}{\mu+1}g_h(0)+\frac{t^{\mu+2}}{(\mu+1)(\mu+2)}g_h'(0)
    +\frac{1}{(\mu+1)(\mu+2)}t^{\mu+2}*g_h''(t).
  \end{equation}
  Then together with Lemmas \ref{lem:herr} and \ref{lem:inherr}, the result \eqref{eq:thmerrCNcst} can be derived by the similar argument as in \eqref{eq:errCN1}.
\end{proof}

\begin{theorem}[Source terms $f=t^{\mu}g(t)$]\label{thm:errCNdst}
  Assume that $u^0(x)\in L^2(\Omega)$, and $f(x,t)=t^{\mu}g(x,t)$ with $\mu\ge0$ satisfy $g\in W^{1,\infty}(0,T;L^2(\Omega))$, $\int_{0}^{t}\|g''(\zeta)\|\mathrm{d}\zeta<\infty$ and $\int_{0}^t(t-\zeta)^{\alpha-1}\zeta^{\mu}\|g''(\zeta)\|\mathrm{d}\zeta<\infty$. Let $u_{h}$ and $\tilde{u}_{h}^n$ be the solutions of \eqref{eq:sFEO} and \eqref{eq:CN}, respectively. Then for $1\le n\le N$, it holds that
  \begin{equation}\label{eq:errCNdst}
    \begin{aligned}
      \|u_{h}(t_{n})-\tilde{u}_{h}^{n}\|\leq c\tau^2 \Big(&t_{n}^{-2}\|u^0\| +t_{n}^{\alpha+\mu-2}\|g(0)\|+t_{n}^{\alpha+\mu-1}\|g'(0)\|\\
      &+t_n^{\alpha+\mu-1}\int_{0}^{t_n}\|g''(\zeta)\|\mathrm{d}\zeta+\int_{0}^{t_n}(t_n-\zeta)^{\alpha-1}\zeta^{\mu}\|g''(\zeta)\|\mathrm{d}\zeta\Big).
    \end{aligned}
  \end{equation}
\end{theorem}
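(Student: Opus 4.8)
The plan is to reuse the architecture of the proofs of Theorems \ref{thm:errCN} and \ref{thm:errCNcst}. Writing $R_h(\cdot):=E_h(\cdot)-E_h^\tau(\cdot)$ and using the representations \eqref{eq:uhE} and \eqref{eq:buhE}, I would first decompose $u_h(t_n)-\tilde u_h^n=(\bar E_h(t_n)-\bar E_h^\tau(t_n))P_hu^0+(R_h*f_h)(t_n)$. Lemma \ref{lem:herr} bounds the initial-data part by $c\tau^2 t_n^{-2}\|u^0\|$, the first term of \eqref{eq:errCNdst}. The whole difficulty is thus shifted onto $(R_h*f_h)(t_n)$, and the key idea is to Taylor-expand the \emph{smooth factor} $g$ rather than $f$: with $g(t)=g(0)+tg'(0)+(t*g''(t))$ one gets $f_h(t)=t^\mu g_h(0)+t^{\mu+1}g_h'(0)+t^\mu\bigl(t*g_h''\bigr)$. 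The first two summands are pure powers of exponents $\mu\ge0$ and $\mu+1>0$, so Lemma \ref{lem:inherr} applies verbatim and yields the second and third terms of \eqref{eq:errCNdst}, exactly as in \eqref{eq:errCN1}.

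The genuinely new term is $(R_h*[t^\mu(t*g_h'')])(t_n)$. Unlike the convolution source $t^\mu*g$ of Theorem \ref{thm:errCNcst}, where associativity of $*$ lets one peel off $R_h*t^{\mu+2}$ directly, here the \emph{product} $t^\mu\times(t*g_h'')$ does not factor through a convolution, which is the main obstacle. I would break it by Fubini, $(R_h*[t^\mu(t*g_h'')])(t_n)=\int_0^{t_n}\Bigl[\int_s^{t_n}R_h(t_n-\xi)\,\xi^\mu(\xi-s)\,d\xi\Bigr]g_h''(s)\,ds$, and then split $\xi^\mu=s^\mu+(\xi^\mu-s^\mu)$ inside the inner integral. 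On the $s^\mu$ piece the factor $s^\mu$ comes out and, after the substitution $\eta=\xi-s$, the remaining operator integral is the honest convolution $(R_h*(\cdot)^1)(t_n-s)$; Lemma \ref{lem:inherr} with exponent $1$ bounds it by $c\tau^2(t_n-s)^{\alpha-1}$, and integrating $s^\mu(t_n-s)^{\alpha-1}$ against $\|g_h''(s)\|$ reproduces exactly the last term $\int_0^{t_n}(t_n-\zeta)^{\alpha-1}\zeta^\mu\|g''(\zeta)\|\,d\zeta$ of \eqref{eq:errCNdst}.

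It remains to treat the piece carrying the weight $(\xi^\mu-s^\mu)(\xi-s)$, which will produce the term $t_n^{\alpha+\mu-1}\int_0^{t_n}\|g''\|\,ds$. The guiding observation is that this weight vanishes to second order as $\xi\downarrow s$ and is controlled by a pure power of $\xi-s$: one has $(\xi^\mu-s^\mu)(\xi-s)\le c\,t_n^{\mu-1}(\xi-s)^2$ for $\mu\ge1$ (by the mean value theorem and $\xi\le t_n$) and $(\xi^\mu-s^\mu)(\xi-s)\le(\xi-s)^{\mu+1}$ for $0\le\mu\le1$ (by subadditivity of $x^\mu$). Feeding exponent $2$ (resp.\ $\mu+1$) into Lemma \ref{lem:inherr} gives the convolution bound $c\tau^2(t_n-s)^{\alpha}$ (resp.\ $c\tau^2(t_n-s)^{\alpha+\mu-1}$); multiplying by the prefactor $t_n^{\mu-1}$ (resp.\ $1$) and integrating against $\|g_h''(s)\|$, together with the elementary splitting $\int_0^{t_n}(t_n-s)^{\alpha+\mu-1}\|g''\|\,ds\le c\,t_n^{\alpha+\mu-1}\!\int_0^{t_n}\|g''\|\,ds+c\!\int_0^{t_n}(t_n-s)^{\alpha-1}s^\mu\|g''\|\,ds$ (obtained by distinguishing $s\le t_n/2$ from $s>t_n/2$), yields the last two terms of \eqref{eq:errCNdst}.

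The delicate point — where I expect the real work to lie — is that one may not simply replace the operator-valued weight by the scalar upper bounds above, because $R_h=E_h-E_h^\tau$ is not sign-definite and a crude pointwise estimate $\|R_h(r)\|\lesssim\tau^2 r^{\alpha-3}$ is too singular to integrate. The second-order vanishing of $(\xi^\mu-s^\mu)(\xi-s)$ and the accompanying $O(\tau^2)$ gain must be extracted while keeping the cancellation intrinsic to $R_h$. I would therefore establish the bound on $\int_s^{t_n}R_h(t_n-\xi)(\xi^\mu-s^\mu)(\xi-s)\,d\xi$ by repeating the contour computation of Lemma \ref{lem:inherr}: deform to $\Gamma_{\varepsilon,\tau}^\theta\cup S_\varepsilon$, insert the resolvent-difference estimate \eqref{eq:resolv2} and Lemmas \ref{lem:fbdf2err} and \ref{lem:fCNerr}, and carry out the resulting real-variable integrals with the power weights identified above. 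Assembling the four contributions then gives \eqref{eq:errCNdst}.
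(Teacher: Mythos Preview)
Your decomposition into the initial-data piece and the Taylor expansion $f_h(t)=t^\mu g_h(0)+t^{\mu+1}g_h'(0)+q_h(t)$ with $q_h(t)=t^\mu(t*g_h''(t))$ matches the paper exactly, and Lemmas \ref{lem:herr} and \ref{lem:inherr} dispose of the first three contributions just as you say. The divergence is entirely in the treatment of $q_h$.

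The paper avoids your Fubini-and-split manoeuvre altogether. It simply treats $q_h$ as a new source to which the argument of Theorem \ref{thm:errCN} applies: one computes $q_h'(t)=\mu t^{\mu-1}(t*g_h'')+t^\mu(1*g_h'')$ and checks $q_h(0)=q_h'(0)=0$ (for $\mu>0$; the case $\mu=0$ is already Theorem \ref{thm:errCN}). Hence $q_h(t)=(t*q_h'')(t)$ by Taylor, associativity gives $(R_h*q_h)(t_n)=((R_h*t)*q_h'')(t_n)$, and Lemma \ref{lem:inherr} with exponent $1$ yields
\[
\|(R_h*q_h)(t_n)\|\le c\tau^2\int_0^{t_n}(t_n-\zeta)^{\alpha-1}\|q_h''(\zeta)\|\,d\zeta.
\]
A direct computation of $q_h''$ and the elementary bound $\|q_h''(t)\|\le c\bigl(t^{\mu-1}(1*\|g_h''\|)(t)+t^\mu\|g_h''(t)\|\bigr)$ then produce the last two terms of \eqref{eq:errCNdst} after one Beta-integral. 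No sign issue ever arises, because the only operator estimate invoked is the kernel bound $\|(R_h*t)(r)\|\le c\tau^2 r^{\alpha-1}$, applied under an honest convolution with the \emph{scalar} function $\|q_h''\|$.

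Your route, by contrast, runs squarely into the difficulty you yourself flag. After the split $\xi^\mu=s^\mu+(\xi^\mu-s^\mu)$, the remainder carries the weight $w_s(\eta)=((\eta+s)^\mu-s^\mu)\eta$, which is not a pure power. The scalar inequalities $w_s(\eta)\le \eta^{\mu+1}$ or $w_s(\eta)\le c\,t_n^{\mu-1}\eta^2$ cannot be inserted because $R_h$ is not sign-definite, and ``repeating the contour computation of Lemma \ref{lem:inherr}'' would require controlling $\widehat{w_s}$ on $\Gamma_{\varepsilon,\tau}^\theta\cup S_\varepsilon$, where it is neither explicit nor immediately bounded by the power decay you need (the Laplace integral defining $\widehat{w_s}$ only converges for $\Re z>0$ and must be continued analytically, with $s$-dependence to track). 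This can presumably be pushed through, but it is exactly the work the paper's argument eliminates: by Taylor-expanding $q_h$ itself rather than dissecting the inner kernel integral, everything reduces to a single application of Lemma \ref{lem:inherr} at exponent $1$ followed by purely scalar estimates on $\|q_h''\|$. I recommend adopting that reduction; it closes the gap you identified with no new contour work.
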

\begin{proof}
  For the case $\mu=0$, the result \eqref{eq:errCNdst} can be directly derived from Theorem \ref{thm:errCN}. We next consider the case $\mu>0$.
  The Taylor expansion of $g$ in \eqref{eq:TEgt} yields that
  \begin{equation}\label{eq:errCNdst1}
    f_h(t)=t^{\mu}g_h(0)+t^{\mu+1}g_h'(0)
    +t^{\mu}(t*g_h''(t)), 
  \end{equation}
  where $q_h(t)=t^{\mu}(t*g_h''(t))$, $q_h(0)=0$ and $q_h'(t)=\mu t^{\mu-1}(t*g_h''(t))+t^{\mu}(1*g_h''(t))$ by the argument in \cite{ChenSZ:2022x}. Furthermore, $q_h'(0)=0$ since $\|q_h'(t)\|\le (\mu+1)t^{\mu}(1*\|g_h''(t)\|)$, and
  $$
    q_h''(t)=\mu(\mu-1) t^{\mu-2}(t*g_h''(t))+2\mu t^{\mu-1}(1*g_h''(t))+t^{\mu}g_h''(t),
  $$
  which satisfies
  \begin{equation}\label{eq:errCNdst3}
    \|q_h''(t)\|\le c\big(t^{\mu-1}(1*\|g_h''(\zeta)\|)+t^{\mu}\|g_h''(t)\|\big).
  \end{equation}
  Then it implies from Lemma \ref{lem:inherr} that
  \begin{equation}\label{eq:errCNdst2}
    \begin{aligned}
      &\|\big((E_{h}(t)-E_h^{\tau}(t))*q_h(t)\big)(t_n)\|\\
      &=\|\big((E_{h}(t)-E_h^{\tau}(t))*t*q_h''(t)\big)(t_n)\|\\
      &\le c\tau^2\int_{0}^{t_n}(t_n-\zeta)^{\alpha-1}\|q_h''(\zeta)\|\mathrm{d}\zeta\\
      &\le c\tau^2\Big(\int_0^{t_n}(t_n-\zeta)^{\alpha-1}\zeta^{\mu-1}\mathrm{d}\zeta\int_{0}^{t_n}\|g''(z)\|\mathrm{d}z+\int_{0}^{t_n}(t_n-\zeta)^{\alpha-1}\zeta^{\mu}\|g''(\zeta)\|\mathrm{d}\zeta\Big)\\
      &\le c\tau^2\Big(t_n^{\alpha+\mu-1}\int_{0}^{t_n}\|g''(\zeta)\|\mathrm{d}\zeta+\int_{0}^{t_n}(t_n-\zeta)^{\alpha-1}\zeta^{\mu}\|g''(\zeta)\|\mathrm{d}\zeta\Big).
    \end{aligned}
  \end{equation}
  Then the result \eqref{eq:errCNdst} is derived from \eqref{eq:errCNdst1}, \eqref{eq:errCNdst3}, \eqref{eq:errCNdst2}, Lemmas \ref{lem:herr} and \ref{lem:inherr}.
\end{proof}

\begin{remark}
  The result in Theorem \ref{thm:errCN} reveals that the Crank-Nicolson scheme \eqref{eq:CN} achieves second-order accuracy for problem \eqref{eq:tfdesub} with certain smooth source terms, which is consistent with the results of Crank-Nicolson schemes in \cite[Theorem 3.8 and Theorem 3.13]{JinLZ:2018a} and \cite[Theorem 2]{WangWY:2021}. However, those error estimates in \cite{JinLZ:2018a,WangWY:2021} are invalid for some source terms with lower regularity such as $f(x,t)=t^{\mu}g(x)$ with $\mu\in(0,1)$. For $u^0(x)\equiv0$ and $f(x,t)=t^{\mu}g(x)$ with $\mu\ge0$, the result in Lemma \ref{lem:inherr} or Theorem \ref{thm:errCNdst} shows the optimal second-order error estimate as follows
  \begin{equation}
    \|u_{h}(t_{n})-\tilde{u}_{h}^{n}\|\leq ct_{n}^{\alpha+\mu-2}\tau^{2}.
  \end{equation}
\end{remark}

\begin{remark}
  For singular source terms $f(x,t)=t^{\mu}g(x)$ with $\mu\in(-1,0)$, the Crank-Nicolson scheme \eqref{eq:CN} can not preserve the optimal second-order accuracy. By the similar approach for Lemma \ref{lem:inherr}, we can obtain that the error estimate of the scheme \eqref{eq:CN} is
  \begin{equation}
    \|u_{h}(t_{n})-\tilde{u}_{h}^{n}\|\leq ct_{n}^{\alpha-2}\tau^{2+\mu}
  \end{equation}
  for the case $u^0(x)\equiv0$ and $f(x,t)=t^{\mu}g(x)$ with $\mu\in(-1,0)$.
\end{remark}

\section{Crank-Nicolson scheme for singular source terms}\label{sec:4}
In this section, we consider designing an alternative Crank-Nicolson scheme with the optimal second-order accuracy for solving \eqref{eq:tfdesub} with singular source terms, such as $f(x,t)=t^{\mu}g(x,t)$ with $\mu\in(-1,0)$. To recover the optimal second-order rate of convergence of Crank-Nicolson method, we need to introduce a function $\tilde{F}_h(t)$ satisfying
\begin{equation}\label{eq:f2}
  D_t\tilde{F}_h(t)=F_h(t),~~\tilde{F}_h(0)=0
\end{equation}
with $F_h(t)$ given by \eqref{eq:f1}. Replacing $F_h(t)$ in \eqref{eq:daU} by $D_t\tilde{F}_h(t)$, then we obtain an equivalent form of \eqref{eq:daU} as follows
\begin{equation}\label{eq:daU2}
  {^{C\!}}D^{\alpha}_tU_{h}+A_{h}U_{h}=D_t\tilde{F}_h+D_t\frac{t^{2-\alpha}}{\Gamma(3-\alpha)}u_{h}(0),\quad
  U_{h}(0)=0.
\end{equation}
Next we will design a new Crank-Nicolson scheme used for numerically solving \eqref{eq:daU2} and \eqref{eq:u1}.

\subsection{Crank-Nicolson scheme II}
We denote $\tilde{U}_{h}(t)$ and $\tilde{u}_{h}(t)$ as approximations to $U_{h}(t)$ and $u_{h}(t)$ solving \eqref{eq:daU2} and \eqref{eq:u1}, which satisfy the difference equations
\begin{align}
  &D_{\tau}^{\alpha}\tilde{U}_{h}(t)+(1-\frac{\alpha}{2})A_{h}\tilde{U}_{h}(t)
    +\frac{\alpha}{2}A_{h}\tilde{U}_{h}(t-\tau)=(1-\frac{\alpha}{2})D_{\tau}\tilde{F}_{h}(t)
    +\frac{\alpha}{2}D_{\tau}\tilde{F}_{h}(t-\tau)\notag\\
  &\hskip4.6cm+(1-\frac{\alpha}{2})D_{\tau}\frac{t^{2-\alpha}}{\Gamma(3-\alpha)}u_{h}(0)
    +\frac{\alpha}{2}D_{\tau}\frac{(t-\tau)^{2-\alpha}}{\Gamma(3-\alpha)}u_{h}(0),\label{eq:nDECN}\\
  &\tilde{u}_{h}(t)=D_{\tau}\tilde{U}_{h}(t)=\frac{1}{\tau}\Big(\frac{3}{2}
    \tilde{U}_{h}(t)-2\tilde{U}_{h}(t-\tau)+\frac{1}{2}\tilde{U}_{h}(t-2\tau)\Big)\label{eq:ncuCN}
\end{align}
for $t>0$, and prescribe $\tilde{U}_{h}(t)=0$, $\tilde{u}_{h}(t)=0$ for $t\le0$,
where $D_{\tau}$ denotes the second-order BDF operator and $D_{\tau}^{\alpha}$ the Gr\"unwald-Letnikov difference operator \eqref{eq:cCN}.

Taking $t=t_n=n\tau,~n=1,2,\cdots,N$ with $\tau=T/N$ in \eqref{eq:nDECN} and \eqref{eq:ncuCN}, we establish a new fully discrete Crank-Nicolson scheme of the form
\begin{equation}\label{eq:nCN}
  \left\{
  \begin{aligned}
    &\tau^{-\alpha}\sum_{j=0}^{n}\sigma_{j}\tilde{U}_{h}^{n-j}+(1-\frac{\alpha}{2})A_{h}\tilde{U}_{h}^{n}
    +\frac{\alpha}{2}A_{h}\tilde{U}_{h}^{n-1}
    =(1-\frac{\alpha}{2})D_{\tau}\tilde{F}_{h}^{n}
    +\frac{\alpha}{2}D_{\tau}\tilde{F}_{h}^{n-1}\\
    &\hskip4.0cm+(1-\frac{\alpha}{2})D_{\tau}\frac{t_{n}^{2-\alpha}}{\Gamma(3-\alpha)}u_{h}(0)
     +\frac{\alpha}{2}D_{\tau}\frac{t_{n-1}^{2-\alpha}}{\Gamma(3-\alpha)}u_{h}(0),\\
    &\tilde{u}_{h}^{n}=\tau^{-1}\Big(\frac{3}{2}\tilde{U}_{h}^{n}-2\tilde{U}_{h}^{n-1}
    +\frac{1}{2}\tilde{U}_{h}^{n-2}\Big)
  \end{aligned}\right.\tag{CN-II}
\end{equation}
with $\tilde{U}_{h}^{0}=0$, where $\tilde{u}_{h}^{n}:=\tilde{u}_{h}(t_{n})$, $\tilde{U}_{h}^{n}:=\tilde{U}_{h}(t_{n})$
and $\tilde{F}_{h}^{n}:=\tilde{F}_{h}(t_{n})$ with $\tilde{F}_h(\cdot)$ satisfying \eqref{eq:f2}.

Taking the Laplace transform on \eqref{eq:nDECN}, we obtain
\begin{equation}\label{eq:nbUh}
  \widehat{\tilde{U}_{h}}(s)=\omega_2(e^{-s\tau})\big(\omega(e^{-s\tau})^{\alpha}+A_{h}\big)^{-1}
  \big(s^{-2}\hat{f_{h}}(s)+s^{\alpha-3}u_{h}(0)\big).
\end{equation}
This implies from \eqref{eq:ncuCN} that
\begin{equation}\label{eq:nuhE}
  \tilde{u}_{h}(t)=\frac{1}{2\pi i}\int_{\Gamma}e^{st}\omega_2(e^{-s\tau})\widehat{\tilde{U}_{h}}(s)\mathrm{d}s.
\end{equation}
An alternative expression of $\tilde{u}_{h}(t)$ is
\begin{equation}\label{eq:ntuhE}
  \tilde{u}_{h}(t)
  =\bar{E}_h^{\tau}(t)u_h(0)+\int_0^t\tilde{E}_h^{\tau}(t-\zeta)f_h(\zeta)\mathrm{d}\zeta,
\end{equation}
where $\omega(\cdot)$ and $\omega_2(\cdot)$ are given by \eqref{eq:gfCN} and \eqref{eq:bdf2}, the operator $\bar{E}_h^{\tau}(t)$ is given by \eqref{eq:bEht}, and $\tilde{E}_h^{\tau}(\cdot)$ is an operator on $X_{h}$ given by
\begin{align}
  \tilde{E}_h^{\tau}(t)&=\frac{1}{2\pi i}\int_{\Gamma}e^{st}\omega_2(e^{-s\tau})^2(\omega(e^{-s\tau})^{\alpha}+A_{h})^{-1}s^{-2}\mathrm{d}s.\label{eq:tEht}
\end{align}

\subsection{Error estimates for singular source terms}\label{sec:4.2}
In this subsection, we first consider the error estimates for the singular source terms satisfying the following conditions in Assumption \ref{asm:af} as discussed in \cite{ZhouT:2022}, and then extend the result to singular source terms in the form of $f(x,t)=t^{\mu}g(x,t)$ with $\mu>-1$.
\begin{assumption}\label{asm:af}
  The singular source term $f(x,t)$ in \eqref{eq:tfdesub} is assumed to be in $L^1(0,T;L^{2}(\Omega))$ such that its Laplace transform with respect to time $t$ is analytic within the domain $\Sigma_{\theta}^{\varepsilon}=\big\{z\in\mathbb{C}:|z|\ge\varepsilon~\text{and}~|\mathrm{arg}z|<\theta\big\}$ for $\theta\in (\pi/2,\pi)$ and small $\varepsilon>0$, and $\|\hat{f}(s)\|\le c|s|^{-\mu-1}$ holds for some $\mu>-1$.
\end{assumption}

As mentioned in \cite{ZhouT:2022}, the singular source term $f(x,t)=t^{\mu}g(x)$ with $-1<\mu<0$ and $g(x)\in L^2(\Omega)$ satisfies the conditions in Assumption \ref{asm:af}.

\begin{remark}\label{rem:herr}
  For the homogenous problem \eqref{eq:tfdesub} with $f(x,t)\equiv0$, the Crank-Nicolson schemes \eqref{eq:CN} and \eqref{eq:nCN} are the same. Thus the error estimate of the scheme \eqref{eq:nCN} for the homogenous case is the same as Theorem \ref{thm:herr}.
\end{remark}

The error estimate of the new Crank-Nicolson scheme \eqref{eq:nCN} for the inhomogeneous case is established in the following theorem, which restores the optimal second-order accuracy for singular source terms satisfying Assumption \ref{asm:af}.

\begin{theorem}\label{thm:errnCN}
  Let $u^0(x)\equiv0$ and $f(x,t)$ in \eqref{eq:tfdesub} satisfy Assumption \ref{asm:af} (with $\mu>-1$). If $u_{h}$ and $\tilde{u}_{h}^n$ are solutions to \eqref{eq:sFEO} and \eqref{eq:nCN}, respectively, then it holds that
  \begin{equation}
    \|u_{h}(t_{n})-\tilde{u}_{h}^{n}\|\leq ct_n^{\alpha+\mu-2}\tau^2,\quad\quad n=1,2,\cdots, N.
  \end{equation}
\end{theorem}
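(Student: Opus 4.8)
The plan is to compare the solution representations of the exact semidiscrete problem \eqref{eq:uhE} and of the scheme \eqref{eq:nCN} under the homogeneous initial condition $u^0\equiv0$. Since $u_h(t)=\int_0^t E_h(t-\zeta)f_h(\zeta)\,\mathrm{d}\zeta$ and $\tilde{u}_h(t)=\int_0^t \tilde{E}_h^{\tau}(t-\zeta)f_h(\zeta)\,\mathrm{d}\zeta$ by \eqref{eq:ntuhE}, the error is governed by the difference of kernels $E_h-\tilde{E}_h^{\tau}$ acting on $f_h$. The crucial point is that in the new scheme the kernel $\tilde{E}_h^{\tau}$ in \eqref{eq:tEht} carries the extra factor $\omega_2(e^{-s\tau})$ together with the symbol $s^{-2}$ (rather than $s^{-1}$ as in \eqref{eq:Eht}), which is precisely the modification that compensates for the singularity of $f$.

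First I would write the error $u_h(t_n)-\tilde{u}_h^n$ using the contour representations \eqref{eq:uh1} and \eqref{eq:nuhE}, substituting \eqref{eq:hatUh} (with $u^0\equiv0$, so $\hat{U}_h(s)=(s^{\alpha}+A_h)^{-1}s^{-1}\hat{f}_h(s)$) and \eqref{eq:nbUh}. Following the decomposition used in Lemma \ref{lem:herr}, I would split the error into four contour integrals: $I_1$ over the tail $\Gamma_{\varepsilon}^{\theta}\setminus\Gamma_{\varepsilon,\tau}^{\theta}$, $I_2$ capturing the symbol error $s-\omega_2(e^{-s\tau})$, $I_3$ capturing the resolvent error $(s^{\alpha}+A_h)^{-1}-(\omega(e^{-s\tau})^{\alpha}+A_h)^{-1}$, and $I_4$ the sum over the periodic copies $p\ne0$ arising from Lemma \ref{lem:uhn}. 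The analysis would run exactly parallel to Lemma \ref{lem:inherr}, except that the operative symbol is now $s^{-2}\omega_2(e^{-s\tau})$, and the source bound is the Assumption \ref{asm:af} estimate $\|\hat{f}_h(s)\|\le c|s|^{-\mu-1}$ rather than the special form $t^{\mu}g_h$.

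For each piece I would invoke the prepared estimates: Lemma \ref{lem:fbdf2err} gives $|\omega_2(e^{-s\tau})|\le C|s|$ and $|s-\omega_2(e^{-s\tau})|\le C\tau^2|s|^3$; Lemmas \ref{lem:fCN} and \ref{lem:fCNerr} give the resolvent comparison \eqref{eq:dcdresolv}, namely $\|(s^{\alpha}+A_h)^{-1}-(\omega(e^{-s\tau})^{\alpha}+A_h)^{-1}\|\le c\tau^2|s|^2|\omega(e^{-s\tau})|^{-\alpha}$. Combining these with $\|\hat{f}_h(s)\|\le c|s|^{-\mu-1}$, each integrand on $\Gamma_{\varepsilon,\tau}^{\theta}\cup S_{\varepsilon}$ is bounded by $c\tau^2|e^{st}||s|^{1-\alpha-\mu}$ up to the resolvent factor $|\omega(e^{-s\tau})|^{-\alpha}\le c|s|^{-\alpha}$. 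Choosing $\varepsilon=t_n^{-1}$ and integrating along $\rho e^{\pm i\theta}$ then yields $\int_{\varepsilon}^{\pi/(\tau\sin\theta)}e^{\rho t_n\cos\theta}\rho^{1-\alpha-\mu}\,\mathrm{d}\rho\le c\,t_n^{\alpha+\mu-2}$, which is the desired bound; the small-arc $S_{\varepsilon}$ contribution scales as $\varepsilon^{2-\alpha-\mu}=t_n^{\alpha+\mu-2}$ and matches. The periodic sum $I_4$ converges since $\sum_{p\ge1}p^{-\mu-2}<\infty$ for $\mu>-1$, contributing the same order.

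The main obstacle is handling the full range $\mu>-1$, and in particular the singular regime $\mu\in(-1,0)$, where $|s|^{1-\alpha-\mu}$ has a larger positive exponent so that convergence of the tail integrals and the summability of the periodic copies must be tracked carefully. The exponent $-\mu-2$ in the series is exactly $>-2$, so summability is preserved, and the worst-case power of $|s|$ is still integrable against $e^{\rho t_n\cos\theta}$ after the substitution $\rho\mapsto\rho/t_n$; the key insight is that the extra $s^{-1}$ in \eqref{eq:tEht} relative to \eqref{eq:Eht} lowers every exponent by one, which is precisely what removes the order reduction seen for scheme \eqref{eq:CN} on singular data. Thus I would assemble $\|u_h(t_n)-\tilde{u}_h^n\|\le\|I_1\|+\|I_2\|+\|I_3\|+\|I_4\|\le c\,t_n^{\alpha+\mu-2}\tau^2$, completing the proof.
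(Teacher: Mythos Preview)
Your approach is essentially the same as the paper's: the paper merges your $I_2$ and $I_3$ into a single term $II_2$ and bounds it via the combined resolvent estimate \eqref{eq:resolv2}, but otherwise deforms the contour and handles the tail, main, and periodic pieces exactly as you describe. One correction to flag: for scheme \eqref{eq:nCN} the periodic-copy integrand carries $\omega_2(e^{-s\tau})^2$ together with $(s+i2p\pi/\tau)^{-2}\hat f_h(s+i2p\pi/\tau)$, so the decay in $p$ is $p^{-\mu-3}$, not $p^{-\mu-2}$; this extra power of $p^{-1}$ (equivalently of $\tau$) is exactly what lets you invoke $\rho^{2-\alpha}\le c\,\rho^{1-\alpha-\mu}\tau^{-\mu-1}$ for all $\mu>-1$ and recover the full $\tau^2$ factor, whereas the CN-I exponent $p^{-\mu-2}$ you wrote would only close for $\mu\ge0$.
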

\begin{proof}
  By the similar argument in Lemma \ref{lem:uhn}, it follows from \eqref{eq:uhE} and \eqref{eq:nuhE} that
  \begin{equation}\label{eq:errnCN1}
    \begin{split}
      u_{h}(t_n)-\tilde{u}_h^n
      &=\frac{1}{2\pi i}\int_{\Gamma_{\varepsilon}^{\theta}\backslash\Gamma_{\varepsilon,\tau}^{\theta}}e^{st_{n}}
      s\hat{U}_{h}(s)\mathrm{d}s\\
      &~~~~~+\frac{1}{2\pi i}\int_{\Gamma_{\varepsilon,\tau}^{\theta}\cup S_{\varepsilon}}e^{s t_{n}}\left(s\hat{U}_{h}(s)-\omega_{2}(e^{-s\tau})\widehat{\tilde{U}_{h}}(s)\right)\mathrm{d}s\\
      &~~~~~-\sum_{\substack{p=-\infty \\ p\ne 0}}^{+\infty}\frac{1}{2\pi i}\int_{\Gamma_{0,\tau}^{\theta}}e^{st_{n} }\omega_{2}(e^{-s\tau})\widehat{\tilde{U}_{h}}(s+i2\pi p/\tau)\mathrm{d}s \\
      &:=II_1+II_2+II_3.
    \end{split}
  \end{equation}
  With \eqref{eq:nbUh} and Assumption \ref{asm:af} on $f(t)$, it yields
  \begin{equation}\label{eq:nbUhs}
    \|\widehat{\tilde{U}_{h}}(s)\|\le c|\omega_2(e^{-s\tau})||\omega(e^{-s\tau})|^{-\alpha}|s^{-\mu-3}.
  \end{equation}
  Then we have from \eqref{eq:nbUhs}, Lemmas \ref{lem:fbdf2err} and \ref{lem:fCNerr} that
  \begin{equation*}
    \|II_1\|\le c\tau^2\int_{\frac{\pi}{\tau\sin\theta}}^{\infty}e^{\rho t_n\cos\theta}\rho^{-\alpha-\mu+1}\mathrm{d}\rho\le ct_n^{\alpha+\mu-2}\tau^2.
  \end{equation*}
  From \eqref{eq:hUhs}, \eqref{eq:nbUh}, \eqref{eq:nbUhs}, \eqref{eq:resolv2} and Lemma \ref{lem:fbdf2err}, it follows that
  \begin{equation*}
    \begin{aligned}
      &\|s\hat{U}_{h}(s)-\omega_{2}(e^{-s\tau})\widehat{\tilde{U}_{h}}(s)\|\\
      &\le\|\big(s-\omega_2(e^{-s\tau})\big)\hat{U}_{h}(s)\|
       +\|\omega_2(e^{-s\tau})\big(\hat{U}_{h}(s)-\widehat{\tilde{U}_{h}}(s)\big)\|\\
      &\le \|\big(s-\omega_2(e^{-s\tau})\big)\hat{U}_{h}(s)\|
       +|\omega_{2}(e^{-s\tau})|\|(s^{\alpha}+A_{h})^{-1}s
        -\left(\omega(e^{-s\tau})^{\alpha}+A_{h}\right)^{-1}\omega_{2}(e^{-s\tau})\|\|\hat{\tilde{F}}_{h}(s)\|\\
      &\le c\tau^{2}\big(|s|^{1-\alpha-\mu}+|\omega_{2}(e^{-s\tau})|(|s|^{3-\alpha}+|s|^2||\omega(e^{-s\tau})|^{-\alpha}|\omega_{2}(e^{-s\tau})|)|s|^{-\mu-3}\big)\\
      &\le c\tau^2|s|^{1-\alpha-\mu}.
    \end{aligned}
  \end{equation*}
  This gives
  \begin{equation}
    \|II_2\|\le c\tau^2\int_{\Gamma_{\varepsilon,\tau}^{\theta}\cup S_{\varepsilon}}|e^{s t_{n}}||s|^{1-\alpha-\mu}|\mathrm{d}s|
    \le ct_n^{\alpha+\mu-2}\tau^2.
  \end{equation}
  Furthermore, by Lemmas \ref{lem:fbdf2err}, \ref{lem:fCN} and \ref{lem:fCNerr}, we get
  \begin{equation}
    \begin{aligned}
      \|II_3\|&\le c\sum_{p=1}^{\infty}\int_{\Gamma_{0,\tau}^{\theta}}|e^{st_{n}| }|s|^{2-\alpha}|s+i2\pi p/\tau|^{-\mu-3}|\mathrm{d}s|\\
      &\le c\tau^{3+\mu}\sum_{p=1}^{+\infty}p^{-\mu-3}\int_{0}^{\frac{\pi}{\tau\sin\theta}}e^{ \rho t_n\cos\theta}\rho^{2-\alpha}\mathrm{d}\rho\\
      &\le ct_n^{\alpha+\mu-2}\tau^2,
    \end{aligned}
  \end{equation}
  where $\rho^{2-\alpha}\le c\rho^{1-\alpha-\mu}\tau^{-\mu-1}$ is applied as $\mu>-1$ and $\rho\in(0,\frac{\pi}{\tau\sin\theta})$. This completes the proof.
\end{proof}

Using similar analysis techniques for Lemma \ref{lem:inherr} and Theorem \ref{thm:errnCN}, we can easily derive the following error bound.

\begin{lemma}\label{lem:inherr2}
  Let $f_h(x,t)=t^{\mu}g_h(x)$ with $\mu>-1$, $E_{h}(\cdot)$ and $\tilde{E}_h^{\tau}(\cdot)$ be given by \eqref{eq:Eh} and \eqref{eq:tEht}, respectively. Then we have
  \begin{equation}
    \|\big(E_{h}-\tilde{E}_h^{\tau}\big)*f_h(t)\|\leq c\tau^2 t^{\alpha+\mu-2}\|g\|,\quad\quad \forall~ t\in(0,T].
  \end{equation}
\end{lemma}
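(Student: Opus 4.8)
The plan is to mirror the argument of Theorem \ref{thm:errnCN} almost verbatim, replacing the discrete time $t_n$ by a continuous $t\in(0,T]$ and using the second Crank-Nicolson operator $\tilde{E}_h^{\tau}$ from \eqref{eq:tEht} in place of $E_h^{\tau}$. Setting $u^0\equiv0$ and $f_h(t)=t^{\mu}g_h$, so that $\hat{f}_h(s)=\Gamma(1+\mu)s^{-\mu-1}g_h$ and hence $\|\hat{f}_h(s)\|\le c|s|^{-\mu-1}\|g_h\|$, I would first record the two transform representations that come for free: $(E_h*f_h)(t)=u_h(t)$ is given by \eqref{eq:uh1} with $\hat{U}_{h}(s)=(s^{\alpha}+A_h)^{-1}s^{-1}\hat{f}_h(s)$ from \eqref{eq:hatUh}, while $(\tilde{E}_h^{\tau}*f_h)(t)=\tilde{u}_h(t)$ is given by \eqref{eq:nuhE} with $\widehat{\tilde{U}_{h}}(s)=\omega_2(e^{-s\tau})(\omega(e^{-s\tau})^{\alpha}+A_h)^{-1}s^{-2}\hat{f}_h(s)$ from \eqref{eq:nbUh}.

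Next I would invoke the contour-deformation identity of Lemma \ref{lem:uhn}, whose proof carries over to the CN-II kernel without change (as already noted at the start of Theorem \ref{thm:errnCN}), to split the difference as
\[
\big(E_{h}-\tilde{E}_h^{\tau}\big)*f_h(t)=J_1+J_2+J_3,
\]
where $J_1$ is the integral of $e^{st}s\hat{U}_{h}(s)$ over the tail $\Gamma_{\varepsilon}^{\theta}\backslash\Gamma_{\varepsilon,\tau}^{\theta}$, $J_2$ is the integral of $e^{st}\big(s\hat{U}_{h}(s)-\omega_2(e^{-s\tau})\widehat{\tilde{U}_{h}}(s)\big)$ over $\Gamma_{\varepsilon,\tau}^{\theta}\cup S_{\varepsilon}$, and $J_3$ is the periodic-copy sum over $\Gamma_{0,\tau}^{\theta}$, with $\varepsilon=t^{-1}$ for $t\ge\tau$ and $\varepsilon=\tau^{-1}$ for $0<t\le\tau$. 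These are exactly the terms $II_1,II_2,II_3$ of Theorem \ref{thm:errnCN} with $t_n$ replaced by $t$ and the $u^0$-contribution dropped.

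Each term is then estimated as in Theorem \ref{thm:errnCN}. For $J_1$, the bound $\|s\hat{U}_{h}(s)\|\le c|s|^{-\alpha-\mu-1}\|g_h\|$ combined with $\rho^{-2}\le c\tau^2$ on the tail range $\rho\ge\pi/(\tau\sin\theta)$ gives $\|J_1\|\le c\tau^2 t^{\alpha+\mu-2}\|g_h\|$. For $J_2$, the crucial consistency estimate is $\|s\hat{U}_{h}(s)-\omega_2(e^{-s\tau})\widehat{\tilde{U}_{h}}(s)\|\le c\tau^2|s|^{1-\alpha-\mu}\|g_h\|$, which I would obtain by splitting into $(s-\omega_2(e^{-s\tau}))\hat{U}_{h}(s)$, controlled via Lemma \ref{lem:fbdf2err}, and $\omega_2(e^{-s\tau})(\hat{U}_{h}(s)-\widehat{\tilde{U}_{h}}(s))$, controlled via the resolvent-difference bound \eqref{eq:resolv2} applied to $\hat{\tilde{F}}_{h}(s)=s^{-2}\hat{f}_h(s)$ together with Lemmas \ref{lem:fCN}--\ref{lem:fCNerr}; integrating $|s|^{1-\alpha-\mu}$ against $e^{st}$ over $\Gamma_{\varepsilon,\tau}^{\theta}\cup S_{\varepsilon}$ yields $\|J_2\|\le c\tau^2 t^{\alpha+\mu-2}\|g_h\|$. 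For $J_3$, using $\|\widehat{\tilde{U}_{h}}(s)\|\le c|\omega_2(e^{-s\tau})|\,|\omega(e^{-s\tau})|^{-\alpha}|s|^{-\mu-3}\|g_h\|$, the periodicity of $\omega,\omega_2$ in $s$, and $|s+i2p\pi/\tau|\ge cp/\tau$ on $\Gamma_{0,\tau}^{\theta}$, the sum $\sum_{p\ge1}p^{-\mu-3}$ converges and, after the scaling inequality $\rho^{2-\alpha}\le c\rho^{1-\alpha-\mu}\tau^{-\mu-1}$ valid for $\rho\in(0,\pi/(\tau\sin\theta))$, one again lands on $\|J_3\|\le c\tau^2 t^{\alpha+\mu-2}\|g_h\|$.

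The only place where the singular regime $-1<\mu<0$ enters nontrivially, and hence the step I expect to be the main obstacle to get right, is the scaling inequality $\rho^{2-\alpha}\le c\rho^{1-\alpha-\mu}\tau^{-\mu-1}$ used in $J_3$ (and its analogue in $J_1$): it holds precisely because $1+\mu>0$, which is exactly the reason the twice-integrated operator $\tilde{E}_h^{\tau}$ extends the admissible range from $\mu\ge0$ in Lemma \ref{lem:inherr} to $\mu>-1$ here. Everything else is a routine repetition of the estimates already carried out in Theorem \ref{thm:errnCN}, so I would simply point there for the remaining details, noting $\|g_h\|\le\|g\|$ by the $L^2$-stability of $P_h$.
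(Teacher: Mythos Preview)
Your proposal is correct and follows exactly the route the paper indicates: the paper itself does not spell out a proof of this lemma but says it follows by ``similar analysis techniques for Lemma \ref{lem:inherr} and Theorem \ref{thm:errnCN}'', which is precisely the three-term contour splitting $J_1+J_2+J_3$ you describe, with the key consistency bound coming from \eqref{eq:resolv2} applied to $s^{-2}\hat f_h(s)$ and the crucial scaling $\rho^{2-\alpha}\le c\rho^{1-\alpha-\mu}\tau^{-\mu-1}$ (valid since $1+\mu>0$) in the periodic-copy sum. Your identification of this scaling step as the point where the extended range $\mu>-1$ enters is exactly right.
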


Next, we establish the error estimate of scheme \eqref{eq:nCN} for singular source terms $f(x,t)=t^{\mu}g(x,t)$ with $-1<\mu<0$, where $g(t)$ has the Taylor expansion $g(t)=g(0)+tg'(0)+t*g''(t)$. The result is stated in the following theorem, the proof of which is based on Lemma \ref{lem:inherr2} and some techniques used in \cite[Lemma 5.5]{ChenSZ:2022x}.
\begin{theorem}\label{thm:errnCN2}
  Let $u^0(x)\equiv0$ and $f(x,t)=t^{\mu}g(x,t)$ with $-1<\mu<0$, where $g\in W^{1,\infty}(0,T;L^2(\Omega))$, $\int_{0}^t(t-\zeta)^{\alpha-1}\zeta^{\mu}\|g''(\zeta)\|\mathrm{d}\zeta<\infty$ and $\int_0^{t}\zeta^{\frac{\mu-1}{2}}\|g''(\zeta)\|\mathrm{d}\zeta<\infty$. If $u_{h}$ and $\tilde{u}_{h}^n$ are the solutions to \eqref{eq:sFEO} and \eqref{eq:nCN}, respectively, then we have that the error bound
  \begin{equation}\label{eq:thmerrnCN2}
    \begin{aligned}
      \|u_{h}(t_{n})-\tilde{u}_{h}^{n}\|\leq c\tau^2\Big(&t_n^{\alpha+\mu-2}\|g(0)\|+t_n^{\alpha+\mu-1}\|g'(0)\|\\
      &+\int_0^{t_n}(t_n-\zeta)^{\alpha-1}\zeta^{\mu}\|g''(\zeta)\|\mathrm{d}\zeta+t_n^{\alpha+\frac{\mu-1}{2}}\int_0^{t_n}\zeta^{\frac{\mu-1}{2}}\|g''(\zeta)\|\mathrm{d}\zeta\Big)
    \end{aligned}
  \end{equation}
  holds for $1\le n\le N$.
\end{theorem}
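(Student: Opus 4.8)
The plan is to mirror the proof of Theorem~\ref{thm:errCNdst}, but to redistribute the temporal singularity so that the weighted convolution kernels stay integrable when $-1<\mu<0$. Since $u^0\equiv0$, the representations \eqref{eq:uhE} and \eqref{eq:ntuhE} give $u_h(t)-\tilde u_h(t)=\big((E_h-\tilde E_h^\tau)*f_h\big)(t)$. Inserting the Taylor expansion $g(t)=g(0)+tg'(0)+t*g''(t)$ into $f_h(t)=t^\mu g_h(t)$ yields
\begin{equation*}
  f_h(t)=t^\mu g_h(0)+t^{\mu+1}g_h'(0)+q_h(t),\qquad q_h(t):=t^\mu\big(t*g_h''(t)\big).
\end{equation*}
By linearity I would treat the three summands separately. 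Lemma~\ref{lem:inherr2} applied with exponents $\mu$ and $\mu+1$ (both $>-1$) bounds the first two contributions by $c\tau^2 t_n^{\alpha+\mu-2}\|g(0)\|$ and $c\tau^2 t_n^{\alpha+\mu-1}\|g'(0)\|$, i.e.\ the first two terms of \eqref{eq:thmerrnCN2}.

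For the third summand I would argue as in \eqref{eq:errCNdst1}--\eqref{eq:errCNdst2}. Differentiating $q_h=t^\mu(t*g_h'')$ gives $q_h(0)=q_h'(0)=0$ together with $\|q_h''(t)\|\le c\big(t^{\mu-1}(1*\|g_h''\|)(t)+t^\mu\|g_h''(t)\|\big)$. Since $q_h=t*q_h''$, the CN-II analog of \eqref{eq:errCNdst2}, now using Lemma~\ref{lem:inherr2} with exponent $1$, reduces the contribution to
\begin{equation*}
  \big\|\big((E_h-\tilde E_h^\tau)*q_h\big)(t_n)\big\|\le c\tau^2\int_0^{t_n}(t_n-\zeta)^{\alpha-1}\|q_h''(\zeta)\|\,\mathrm{d}\zeta.
\end{equation*}
Splitting along the two pieces of the bound for $\|q_h''\|$, the $t^\mu\|g_h''\|$ piece produces at once the third term $\int_0^{t_n}(t_n-\zeta)^{\alpha-1}\zeta^\mu\|g''(\zeta)\|\,\mathrm{d}\zeta$ of \eqref{eq:thmerrnCN2}.

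The main obstacle is the residual piece $\int_0^{t_n}(t_n-\zeta)^{\alpha-1}\zeta^{\mu-1}(1*\|g_h''\|)(\zeta)\,\mathrm{d}\zeta$: for $-1<\mu<0$ the kernel $(t_n-\zeta)^{\alpha-1}\zeta^{\mu-1}$ fails to be integrable near $\zeta=0$, which is exactly where the $\mu>0$ argument of Theorem~\ref{thm:errCNdst} breaks down. Following \cite[Lemma~5.5]{ChenSZ:2022x}, I would borrow half a power of $\zeta$ from the convolution factor: using $r\le\zeta$ and $\tfrac{1-\mu}{2}>0$,
\begin{equation*}
  (1*\|g_h''\|)(\zeta)=\int_0^\zeta r^{\frac{1-\mu}{2}}r^{\frac{\mu-1}{2}}\|g_h''(r)\|\,\mathrm{d}r\le\zeta^{\frac{1-\mu}{2}}\int_0^{t_n}r^{\frac{\mu-1}{2}}\|g_h''(r)\|\,\mathrm{d}r,
\end{equation*}
so that $\zeta^{\mu-1}(1*\|g_h''\|)(\zeta)\le\zeta^{\frac{\mu-1}{2}}\int_0^{t_n}r^{\frac{\mu-1}{2}}\|g_h''\|\,\mathrm{d}r$. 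The reduced kernel $(t_n-\zeta)^{\alpha-1}\zeta^{\frac{\mu-1}{2}}$ is integrable since $\tfrac{\mu+1}{2}>0$, and the resulting Beta integral equals $B\big(\alpha,\tfrac{\mu+1}{2}\big)t_n^{\alpha+\frac{\mu-1}{2}}$; this delivers the fourth term $t_n^{\alpha+\frac{\mu-1}{2}}\int_0^{t_n}\zeta^{\frac{\mu-1}{2}}\|g''(\zeta)\|\,\mathrm{d}\zeta$. Collecting the four contributions gives \eqref{eq:thmerrnCN2}.
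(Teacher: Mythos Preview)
Your proposal is correct and follows essentially the same route as the paper: the same Taylor splitting of $f_h$, the same reduction of the $q_h$ piece via $q_h=t*q_h''$ and Lemma~\ref{lem:inherr2}, and the same ``borrow half a power'' trick $\zeta^{\mu-1}(1*\|g_h''\|)(\zeta)\le\zeta^{\frac{\mu-1}{2}}\int_0^{t_n}r^{\frac{\mu-1}{2}}\|g_h''(r)\|\,\mathrm{d}r$ to make the Beta integral converge. The only cosmetic differences are that the paper cites Theorem~\ref{thm:errnCN} rather than Lemma~\ref{lem:inherr2} for the first two summands (equivalent here) and treats the $t^{\mu-2}(t*g_h'')$ contribution to $q_h''$ separately instead of absorbing it into the $t^{\mu-1}(1*\|g_h''\|)$ bound as you do.
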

\begin{proof}
  It follows from $g(t)=g(0)+tg'(0)+t*g''(t)$ that
  \begin{equation}\label{eq:tFht}
    f_h(t)=t^{\mu}g_h(t)
    =t^{\mu}g_h(0)+t^{\mu+1}g_h'(0)+t^{\mu}(t*g_h''(t)).
  \end{equation}
  The error estimates for the first two terms in the right hand side of \eqref{eq:tFht} can be directly obtained from Theorem \ref{thm:errnCN}. For the third term, let $q_h(t)=t^{\mu}(t*g_h''(t))$ with $q_h(0)=0$, then it has
  \begin{align}
  q_h(t)&=tq_h'(0)+t*q_h''(t),\label{eq:errnCN24}\\
    q_h'(t)&=\mu t^{\mu-1}(t*g_h''(t))+t^{\mu}(1*g_h''(t)),\\
    q_h''(t)&=\mu(\mu-1)t^{\mu-2}(t*g_h''(t))+2\mu t^{\mu-1}(1*g_h''(t))+t^{\mu}g_h''(t),\label{eq:errnCN21}
  \end{align}
  which implies that
  \begin{equation}\label{eq:errnCN22}
    \|q_h'(t)\|\le(\mu+1)t^{\mu}\int_0^t\|g_h''(\zeta)\|\mathrm{d}\zeta\le (\mu+1)\int_0^t\zeta^{\mu}\|g_h''(\zeta)\|\mathrm{d}\zeta,\quad -1<\mu<0.
  \end{equation}
  Then $q_h'(0)=0$. We derive from \eqref{eq:errnCN24} and Lemma \ref{lem:inherr2} that
  \begin{equation*}\label{eq:errnCN23}
    \|\big(\tilde{E}_{h}(t)-\tilde{E}_h^{\tau}(t)\big)*q_h(t))(t_n)\|
    =\|\big(\tilde{E}_{h}(t)-\tilde{E}_h^{\tau}(t)\big)*t*q_h''(t))(t_n)\|\le c\tau^2\int_0^{t_n}(t_n-\zeta)^{\alpha-1}\|q_h''(\zeta)\|\mathrm{d}\zeta.
  \end{equation*}
  In addition, it follows that
  \begin{align*}
    \int_0^{t_n}(t_n-\zeta)^{\alpha-1}\|\zeta^{\mu-1}(1*g_h''(\zeta))\|\mathrm{d}\zeta
    &\le c\int_0^{t_n}(t_n-\zeta)^{\alpha-1}\zeta^{\frac{\mu-1}{2}}\int_0^{\zeta}z^{\frac{\mu-1}{2}}\|g_h''(z)\|\mathrm{d}z\mathrm{d}\zeta\\
    &\le ct_n^{\alpha+\frac{\mu-1}{2}}\int_0^{t_n}z^{\frac{\mu-1}{2}}\|g''(z)\|\mathrm{d}z
  \end{align*}
  and
  \begin{equation*}
   t^{\alpha-1}\int_0^{t}\zeta^{\mu}\|g_h''(\zeta)\|\mathrm{d}\zeta
   \le \int_0^{t}(t-\zeta)^{\alpha-1}\zeta^{\mu}\|g''(\zeta)\|\mathrm{d}\zeta.
  \end{equation*}
  The similar estimate holds for the term $t^{\mu-2}(t*g_h''(t))$ in \eqref{eq:errnCN21}.
  Thus, the result \eqref{eq:thmerrnCN2} is obtained from Lemma \ref{lem:inherr2} and the above discussions.
\end{proof}

\section{Numerical examples}\label{sec:5}
In this section, we present some numerical examples to verify the theoretical convergence results of the proposed Crank-Nicolson schemes \eqref{eq:CN} and \eqref{eq:nCN} for solving the sub-diffusion problem \eqref{eq:tfdesub} with both nonsingular and singular source terms.
Since the exact solutions in the following numerical examples are unknown, we utilize the formula
$
  \log_2(\|e_h^{N}\|/\|e_h^{2N}\|)
$
to verify the convergence rates of the schemes, where $e_h^{N}:=\tilde{u}_{h}^N-\tilde{u}_{h}^{N/2}$ and $\tilde{u}_{h}^N$ refers to the numerical solutions at time $T$ by the fully discrete schemes with the time step size $\tau=T/N$ and spatial mesh size $h$.
For one dimensional case, the spatial interval $\Omega=(0,1)$ is equally divided into subintervals with a mesh size $h=1/128$ for the finite element discretization. The domain $\Omega=(0,1)^2$ in two dimensional case is uniformly partitioned into triangles with the mesh size $h=1/128$.

\begin{example}\label{exm:f0}
  Consider the sub-diffusion problem \eqref{eq:tfdesub} with $T=1$, initial value $u^0(x)\equiv0$ and the following data:
  \begin{itemize}
    \item[\rm (a)] $\Omega=(0,1)$ and $f(x,t)=(1+t^{\mu})x^{-\frac{1}{4}}$ with $0<\mu<1$;
    \item[\rm (b)] $\Omega=(0,1)^2$ and $f(x,t)=(1+t^{\mu})\chi_{[\frac{1}{4},\frac{3}{4}]\times[\frac{1}{4},\frac{3}{4}]}(x)$, where $0<\mu<1$ and $\chi_{[\frac{1}{4},\frac{3}{4}]\times[\frac{1}{4},\frac{3}{4}]}(x)$ is the indicator function over $[\frac{1}{4},\frac{3}{4}]\times[\frac{1}{4},\frac{3}{4}]$.
  \end{itemize}
\end{example}

\begin{table}[htb!]
  \centering
  \caption{Errors and convergence rates by schemes CN-JLZ \cite{JinLZ:2018a}, CN-WWY \cite{WangWY:2021} and scheme \eqref{eq:CN} for case (a) of Example \ref{exm:f0}.}\label{tab:f0a}
  \setlength{\tabcolsep}{8.0pt}\small
  \begin{tabular}{lccccccc}
    \toprule
    scheme & $\alpha$ & $\mu$ & $N=$80 & 160 & 320 & 640 & rate  \\
    \midrule
    CN-JLZ & 0.2 & 0.1 & 6.1084E-06 & 2.9368E-06 & 1.3913E-06 & 6.5432E-07 & 1.07  \\
           &     & 0.5 & 5.0993E-07 & 2.2208E-07 & 8.8577E-08 & 3.3787E-08 & 1.31  \\
           & 0.5 & 0.1 & 1.1141E-05 & 5.4485E-06 & 2.6029E-06 & 1.2294E-06 & 1.06  \\
           &     & 0.5 & 6.1043E-07 & 3.3293E-07 & 1.4585E-07 & 5.8467E-08 & 1.13  \\
           & 0.8 & 0.1 & 8.5079E-06 & 4.3208E-06 & 2.1011E-06 & 1.0013E-06 & 1.03  \\
           &     & 0.5 & 1.2196E-07 & 1.2085E-07 & 8.2079E-08 & 3.8663E-08 & 0.55  \\
    \midrule
    CN-WWY & 0.2 & 0.1 & 6.1591E-06 & 2.9489E-06 & 1.3942E-06 & 6.5505E-07 & 1.08  \\
           &     & 0.5 & 5.6062E-07 & 2.3419E-07 & 9.1538E-08 & 3.4519E-08 & 1.34  \\
           & 0.5 & 0.1 & 1.1445E-05 & 5.5212E-06 & 2.6206E-06 & 1.2338E-06 & 1.07  \\
           &     & 0.5 & 9.1447E-07 & 4.0557E-07 & 1.6360E-07 & 6.2855E-08 & 1.29  \\
           & 0.8 & 0.1 & 9.0587E-06 & 4.4519E-06 & 2.1331E-06 & 1.0092E-06 & 1.06  \\
           &     & 0.5 & 4.2889E-07 & 2.5195E-07 & 1.1405E-07 & 4.6557E-08 & 1.07  \\
    \midrule
    CN-I   & 0.2 & 0.1 & 2.2824E-06 & 5.5842E-07 & 1.3804E-07 & 3.4301E-08 & 2.02  \\
           &     & 0.5 & 4.6121E-06 & 1.1336E-06 & 2.8101E-07 & 6.9956E-08 & 2.01  \\
           & 0.5 & 0.1 & 3.6320E-06 & 8.8802E-07 & 2.1938E-07 & 5.4478E-08 & 2.02  \\
           &     & 0.5 & 5.6160E-06 & 1.3795E-06 & 3.4186E-07 & 8.5088E-08 & 2.01  \\
           & 0.8 & 0.1 & 4.4374E-06 & 1.0837E-06 & 2.6755E-07 & 6.6422E-08 & 2.02  \\
           &     & 0.5 & 6.6373E-06 & 1.6290E-06 & 4.0349E-07 & 1.0040E-07 & 2.02  \\
    \bottomrule
\end{tabular}
\end{table}

\begin{table}[htb!]
  \centering
  \caption{Errors and convergence rates by schemes CN-JLZ \cite{JinLZ:2018a}, CN-WWY \cite{WangWY:2021} and scheme \eqref{eq:CN} for case (b) of Example \ref{exm:f0}.}\label{tab:f0b}
  \setlength{\tabcolsep}{8.0pt}\small
  \begin{tabular}{lccccccc}
    \toprule
    scheme & $\alpha$ & $\mu$ & $N=$80 & 160 & 320 & 640 & rate  \\
    \midrule
    CN-JLZ & 0.2 & 0.1 & 4.6760E-09 & 2.2521E-09 & 1.0678E-09 & 5.0244E-10 & 1.07  \\
           &     & 0.5 & 3.7747E-10 & 1.6711E-10 & 6.7183E-11 & 2.5742E-11 & 1.29  \\
           & 0.5 & 0.1 & 7.4765E-09 & 3.6628E-09 & 1.7513E-09 & 8.2752E-10 & 1.06  \\
           &     & 0.5 & 3.7941E-10 & 2.1640E-10 & 9.6281E-11 & 3.8891E-11 & 1.10  \\
           & 0.8 & 0.1 & 4.5037E-09 & 2.2486E-09 & 1.0849E-09 & 5.1499E-10 & 1.04  \\
           &     & 0.5 & 3.4909E-11 & 8.5516E-11 & 4.7854E-11 & 2.1245E-11 & 0.24  \\
    \midrule
    CN-WWY & 0.2 & 0.1 & 4.7159E-09 & 2.2616E-09 & 1.0702E-09 & 5.0301E-10 & 1.08  \\
           &     & 0.5 & 4.1742E-10 & 1.7665E-10 & 6.9515E-11 & 2.6318E-11 & 1.33  \\
           & 0.5 & 0.1 & 7.6834E-09 & 3.7122E-09 & 1.7634E-09 & 8.3050E-10 & 1.07  \\
           &     & 0.5 & 5.8638E-10 & 2.6583E-10 & 1.0836E-10 & 4.1876E-11 & 1.27  \\
           & 0.8 & 0.1 & 4.7534E-09 & 2.3083E-09 & 1.0995E-09 & 5.1860E-10 & 1.07  \\
           &     & 0.5 & 2.8458E-10 & 1.4517E-10 & 6.2437E-11 & 2.4850E-11 & 1.17  \\
    \midrule
    CN-I   & 0.2 & 0.1 & 2.3703E-08 & 5.8007E-09 & 1.4349E-09 & 3.5684E-10 & 2.02  \\
           &     & 0.5 & 6.3948E-08 & 1.5721E-08 & 3.8977E-09 & 9.7040E-10 & 2.01  \\
           & 0.5 & 0.1 & 2.4555E-08 & 6.0088E-09 & 1.4863E-09 & 3.6958E-10 & 2.02  \\
           &     & 0.5 & 6.4642E-08 & 1.5891E-08 & 3.9397E-09 & 9.8086E-10 & 2.01  \\
           & 0.8 & 0.1 & 2.4435E-08 & 5.9793E-09 & 1.4790E-09 & 3.6777E-10 & 2.02  \\
           &     & 0.5 & 6.4887E-08 & 1.5951E-08 & 3.9546E-09 & 9.8455E-10 & 2.01  \\
    \bottomrule
\end{tabular}
\end{table}

In Tables \ref{tab:f0a}-\ref{tab:f0b}, the numerical results by the Crank-Nicolson schemes CN-JLZ \cite{JinLZ:2018a}, CN-WWY \cite{WangWY:2021} and our proposed Crank-Nicolson scheme \eqref{eq:CN} are compared for 1D and 2D cases in Example \ref{exm:f0} with low regular source terms, where $\alpha=0.2,0.5,0.8$ and $\mu=0.1,0.5$. It indicates that the schemes CN-JLZ \cite{JinLZ:2018a} and CN-WWY \cite{WangWY:2021} lose the optimal second order convergence rate. However, our scheme \eqref{eq:CN} remains to keep the optimal second order convergence rate, which confirms the theoretical error estimate. In addition, our second Crank-Nicolson scheme \eqref{eq:nCN} also converges with second order for both cases in Example \ref{exm:f0}, the numerical results are omitted as much more results by \eqref{eq:nCN} are reported in the following examples with singular source terms.

\begin{example}\label{exm:f1}
  Consider the one dimensional problem \eqref{eq:tfdesub} with $T=1$, $\Omega=(0,1)$ and the following data:
  \begin{itemize}
    \item[\rm(a)] $u^0(x)\equiv0$ and $f(x,t)=\chi_{[0,\frac{1}{2}]}(t)t^{\mu}x^{-\frac{1}{4}}$ with $-1<\mu<0$, where $\chi_{[0,\frac{1}{2}]}(t)$ is the indicator function over the time interval $[0,\frac{1}{2}]$;
    \item[\rm(b)] $u^0(x)=\chi_{[\frac{1}{4},\frac{3}{4}]}(x)$ and $f(x,t)\equiv0$, where $\chi_{[\frac{1}{4},\frac{3}{4}]}(x)$ is the indicator function over $[\frac{1}{4},\frac{3}{4}]$.
  \end{itemize}
\end{example}

\begin{table}[htb!]
  \centering
  \caption{Errors and convergence rates by scheme \eqref{eq:nCN} for case (a) of Example \ref{exm:f1}.}\label{tab:f1a}
  \setlength{\tabcolsep}{8.0pt}\small
  \begin{tabular}{ccccccc}
    \toprule
    $\alpha$ & $\mu$ & $N=$80 & 160 & 320 & 640 & rate  \\
    \midrule
     0.1 & -0.1 & 1.2641E-06 & 3.0073E-07 & 7.3295E-08 & 1.8097E-08 & 2.04  \\
         & -0.5 & 2.3004E-06 & 5.4868E-07 & 1.3389E-07 & 3.3082E-08 & 2.04  \\
         & -0.9 & 8.9980E-06 & 2.1628E-06 & 5.2914E-07 & 1.3058E-07 & 2.04  \\
     0.5 & -0.1 & 9.2521E-06 & 2.1959E-06 & 5.3439E-07 & 1.3179E-07 & 2.04  \\
         & -0.5 & 1.6394E-05 & 3.9009E-06 & 9.5050E-07 & 2.3453E-07 & 2.04  \\
         & -0.9 & 5.9161E-05 & 1.4191E-05 & 3.4686E-06 & 8.5625E-07 & 2.04  \\
     0.9 & -0.1 & 2.7447E-05 & 6.4460E-06 & 1.5586E-06 & 3.8305E-07 & 2.05  \\
         & -0.5 & 4.4356E-05 & 1.0443E-05 & 2.5283E-06 & 6.2172E-07 & 2.05  \\
         & -0.9 & 1.1618E-04 & 2.7613E-05 & 6.7135E-06 & 1.6533E-06 & 2.04  \\
    \bottomrule
\end{tabular}
\end{table}

\begin{table}[htb!]
  \centering
  \caption{Errors and convergence rates by scheme \eqref{eq:nCN} for case (b) of Example \ref{exm:f1}.}\label{tab:f1b}
  \setlength{\tabcolsep}{10pt}\small
  \begin{tabular}{cccccc}
    \toprule
     $\alpha$ & $N=$80 & 160 & 320 & 640 & rate  \\
    \midrule
     0.1 & 1.7805E-06 & 4.3515E-07 & 1.0755E-07 & 2.6729E-08 & 2.02  \\
     0.5 & 8.7487E-06 & 2.1285E-06 & 5.2484E-07 & 1.3030E-07 & 2.02  \\
     0.9 & 8.4638E-06 & 2.0436E-06 & 5.0143E-07 & 1.2410E-07 & 2.03  \\
    \bottomrule
  \end{tabular}
\end{table}

The errors and convergence rates obtained by the Crank-Nicolson scheme \eqref{eq:nCN} for case (a) of Example \ref{exm:f1} are shown in Table \ref{tab:f1a} with $\alpha=0.1,0.5,0.9$ and $\mu=-0.1,-0.5,-0.9$. It is observed that our proposed Crank-Nicolson scheme \eqref{eq:nCN} converges with rate $O(\tau^2)$, which is consistent with our theoretical result and shows the effectiveness of the scheme \eqref{eq:nCN} for solving the problem \eqref{eq:tfdesub} with singular and nonsmooth source terms.
For case (b) of Example \ref{exm:f1}, the numerical results computed by the scheme \eqref{eq:nCN} are presented with $\alpha=0.1,0.5,0.9$ in Table \ref{tab:f1b}, which also verify the theoretical convergence result.

\begin{example}\label{exm:f2}
  Let $T=1$ and $\Omega=(0,1)^2$. Consider the two dimensional problem \eqref{eq:tfdesub} with the following data:
  \begin{itemize}
    \item[\rm(a)] $u^0(x)\equiv0$ and $f(x,t)=t^{\mu}\chi_{[\frac{1}{4},\frac{3}{4}]\times[\frac{1}{4},\frac{3}{4}]}(x)$, where $-1<\mu<0$ and $\chi_{[\frac{1}{4},\frac{3}{4}]\times[\frac{1}{4},\frac{3}{4}]}(x)$ is the indicator function over the space domain $[\frac{1}{4},\frac{3}{4}]\times[\frac{1}{4},\frac{3}{4}]$.
    \item[\rm(b)] $u^0(x)=\chi_{[\frac{1}{4},\frac{3}{4}]\times[\frac{1}{4},\frac{3}{4}]}(x)$ and $f(x,t)\equiv0$.
  \end{itemize}
\end{example}

\begin{table}[htb!]
\centering
\caption{Errors and convergence rates by the scheme \eqref{eq:nCN} for case (a) of Example \ref{exm:f2}.}\label{tab:f2a}
  \setlength{\tabcolsep}{8.0pt}\small
  \begin{tabular}{ccccccc}
    \toprule
     $\alpha$ & $\mu$ & $N=$80 & 160 & 320 & 640 & rate  \\
    \midrule
     0.1 & -0.2 & 1.2359E-07 & 3.0169E-08 & 7.4522E-09 & 1.8523E-09 & 2.02  \\
          & -0.5 & 3.8953E-07 & 9.4784E-08 & 2.3376E-08 & 5.8035E-09 & 2.02  \\
          & -0.8 & 7.5442E-07 & 1.8299E-07 & 4.5055E-08 & 1.1175E-08 & 2.03  \\
     0.5 & -0.2 & 1.2319E-07 & 3.0073E-08 & 7.4287E-09 & 1.8459E-09 & 2.02  \\
          & -0.5 & 3.9145E-07 & 9.5251E-08 & 2.3490E-08 & 5.8335E-09 & 2.02  \\
          & -0.8 & 7.6555E-07 & 1.8568E-07 & 4.5715E-08 & 1.1338E-08 & 2.03  \\
     0.9 & -0.2 & 1.2516E-07 & 3.0553E-08 & 7.5471E-09 & 1.8756E-09 & 2.02  \\
          & -0.5 & 3.9560E-07 & 9.6359E-08 & 2.3763E-08 & 5.9006E-09 & 2.02  \\
          & -0.8 & 7.6580E-07 & 1.8696E-07 & 4.6031E-08 & 1.1419E-08 & 2.02  \\
    \bottomrule
  \end{tabular}
\end{table}

\begin{table}[htb!]
  \centering
  \caption{Errors and convergence rates by the scheme \eqref{eq:nCN} for case (b) of Example \ref{exm:f2}.}\label{tab:f2b}
  \setlength{\tabcolsep}{10pt}\small
  \begin{tabular}{cccccc}
    \toprule
      $\alpha$ & $N=$80 & 160 & 320 & 640 & rate  \\
    \midrule
      0.1 & 5.2753E-08 & 1.2891E-08 & 3.1861E-09 & 7.9177E-10 & 2.02  \\
            0.5 & 2.2085E-07 & 5.3739E-08 & 1.3253E-08 & 3.2906E-09 & 2.02  \\
            0.9 & 9.8069E-08 & 2.3563E-08 & 5.7979E-09 & 1.4379E-09 & 2.03  \\
    \bottomrule
\end{tabular}
\end{table}

In Tables \ref{tab:f2a}-\ref{tab:f2b}, the numerical results obtained by the Crank-Nicolson scheme \eqref{eq:nCN} for two dimensional sub-diffusion problems \eqref{eq:tfdesub} in Example \ref{exm:f2} are shown, respectively. As the similar efficient performances for one dimensional problem in Example \ref{exm:f1}, the proposed Crank-Nicolson scheme \eqref{eq:nCN} also performs effectively and converges numerically with the optimal second order for the two dimensional sub-diffusion problem \eqref{eq:tfdesub} with singular source term.

\section{Conclusions}
In this paper, we develop two novel fractional Crank-Nicolson schemes without corrections for solving the sub-diffusion problem \eqref{eq:tfdesub} with nonsingular and singular source terms in time. We first propose a novel Crank-Nicolson scheme without corrections for the problem with regular source terms. Moreover, for problems with singular source terms, another fractional Crank-Nicolson scheme is designed and discussed in details. The error estimates of the two schemes are rigorously analyzed by the Laplace transform technique, and proved to be convergent with the optimal second-order for both nonsingular and singular source terms. The theoretical results are verified in the numerical examples.

%
%





\begin{thebibliography}{10}
\setlength{\itemsep}{0pt plus 0.3ex}
\providecommand{\url}[1]{\texttt{#1}}
\providecommand{\urlprefix}{URL }
\providecommand{\eprint}[2][]{\url{#2}}

\bibitem{Bajlekova:2001}
Bajlekova, E.G.: Fractional evolution equations in Banach spaces.
\newblock Ph.D. thesis, Eindhoven University of Technology, Eindhoven (2001).

\bibitem{BrunnerH:1986}
Brunner, H., van~der Houwen, P.J.: The numerical solution of {V}olterra
  equations.
\newblock North-Holland Publishing Co., Amsterdam (1986).

\bibitem{ChenSZ:2022x}
Chen, M., Shi, J., Zhou, Z.: Modified {BDF}2 schemes for subdiffusion models
  with a singular source term.
\newblock arXiv:2207.08447  (2022).

\bibitem{CuestaLP:2006}
Cuesta, E., Lubich, C., Palencia, C.: Convolution quadrature time
  discretization of fractional diffusion-wave equations.
\newblock Math. Comp. \textbf{75}(254), 673--696 (2006).

\bibitem{Dimitrov:2014}
Dimitrov, Y.: Numerical approximations for fractional differential equations.
\newblock J. Fract. Calc. Appl. \textbf{5}(3S), Paper No. 22, 45 (2014).

\bibitem{GaoSZ:2014}
Gao, G.H., Sun, Z.Z., Zhang, H.W.: A new fractional numerical differentiation
  formula to approximate the {C}aputo fractional derivative and its
  applications.
\newblock J. Comput. Phys. \textbf{259}, 33--50 (2014).

\bibitem{JinLPZ:2015}
Jin, B., Lazarov, R., Pasciak, J., Zhou, Z.: Error analysis of semidiscrete
  finite element methods for inhomogeneous time-fractional diffusion.
\newblock IMA J. Numer. Anal. \textbf{35}(2), 561--582 (2015).

\bibitem{JinLZ:2013}
Jin, B., Lazarov, R., Zhou, Z.: Error estimates for a semidiscrete finite
  element method for fractional order parabolic equations.
\newblock SIAM J. Numer. Anal. \textbf{51}(1), 445--466 (2013).

\bibitem{JinLZ:2016a}
Jin, B., Lazarov, R., Zhou, Z.: An analysis of the {L}1 scheme for the
  subdiffusion equation with nonsmooth data.
\newblock IMA J. Numer. Anal. \textbf{36}(1), 197--221 (2016).

\bibitem{JinLZ:2017}
Jin, B., Li, B., Zhou, Z.: Correction of high-order {BDF} convolution
  quadrature for fractional evolution equations.
\newblock SIAM J. Sci. Comput. \textbf{39}(6), A3129--A3152 (2017).

\bibitem{JinLZ:2018a}
Jin, B., Li, B., Zhou, Z.: An analysis of the {C}rank-{N}icolson method for
  subdiffusion.
\newblock IMA J. Numer. Anal. \textbf{38}(1), 518--541 (2018).

\bibitem{Karaa2018}
Karaa, S.: Semidiscrete finite element analysis of time fractional parabolic
  problems: A unified approach.
\newblock SIAM J. Numer. Anal. \textbf{56}(3), 1673--1692 (2018).

\bibitem{LinX:2007}
Lin, Y., Xu, C.: Finite difference/spectral approximations for the
  time-fractional diffusion equation.
\newblock J. Comput. Phys. \textbf{225}(2), 1533--1552 (2007).

\bibitem{Lubich:1986b}
Lubich, C.: Discretized fractional calculus.
\newblock SIAM J. Math. Anal. \textbf{17}(3), 704--719 (1986).

\bibitem{LubichST:1996}
Lubich, C., Sloan, I.H., Thom\'{e}e, V.: Nonsmooth data error estimates for
  approximations of an evolution equation with a positive-type memory term.
\newblock Math. Comp. \textbf{65}(213), 1--17 (1996).

\bibitem{MetzlerK:2000}
Metzler, R., Klafter, J.: The random walk's guide to anomalous diffusion: a
  fractional dynamics approach.
\newblock Phys. Rep. \textbf{339}(1), 1--77 (2000).

\bibitem{MillerF:1971}
Miller, R.K., Feldstein, A.: Smoothness of solutions of {V}olterra integral
  equations with weakly singular kernels.
\newblock SIAM J. Math. Anal. \textbf{2}, 242--258 (1971).

\bibitem{SakamotoY:2011}
Sakamoto, K., Yamamoto, M.: Initial value/boundary value problems for
  fractional diffusion-wave equations and applications to some inverse
  problems.
\newblock J. Math. Anal. Appl. \textbf{382}(1), 426--447 (2011).

\bibitem{SchneiderW:1989}
Schneider, W.R., Wyss, W.: Fractional diffusion and wave equations.
\newblock J. Math. Phys. \textbf{30}(1), 134--144 (1989).

\bibitem{StynesOG:2017}
Stynes, M., O'Riordan, E., Gracia, J.L.: Error analysis of a finite difference
  method on graded meshes for a time-fractional diffusion equation.
\newblock SIAM J. Numer. Anal. \textbf{55}(2), 1057--1079 (2017).

\bibitem{Thomee:2006}
Thom\'ee, V.: Galerkin finite element methods for parabolic problems.
\newblock Springer-Verlag, Berlin, 2nd edition (2006).

\bibitem{WangWY:2021}
Wang, J., Wang, J., Yin, L.: A single-step correction scheme of
  {C}rank-{N}icolson convolution quadrature for the subdiffusion equation.
\newblock J. Sci. Comput. \textbf{87}(1), Paper No. 26, 18 (2021).

\bibitem{WangYYP:2020}
Wang, Y., Yan, Y., Yan, Y., Pani, A.K.: Higher order time stepping methods for
  subdiffusion problems based on weighted and shifted {G}r\"{u}nwald-{L}etnikov
  formulae with nonsmooth data.
\newblock J. Sci. Comput. \textbf{83}(3), Paper No. 40, 29 (2020).

\bibitem{YanKF:2018}
Yan, Y., Khan, M., Ford, N.J.: An analysis of the modified {L}1 scheme for
  time-fractional partial differential equations with nonsmooth data.
\newblock SIAM J. Numer. Anal. \textbf{56}(1), 210--227 (2018).

\bibitem{ZengLLT:2015}
Zeng, F., Li, C., Liu, F., Turner, I.: Numerical algorithms for time-fractional
  subdiffusion equation with second-order accuracy.
\newblock SIAM J. Sci. Comput. \textbf{37}(1), A55--A78 (2015).

\bibitem{ZhangSW:2011}
Zhang, Y.N., Sun, Z.Z., Wu, H.W.: Error estimates of {C}rank-{N}icolson-type
  difference schemes for the subdiffusion equation.
\newblock SIAM J. Numer. Anal. \textbf{49}(6), 2302--2322 (2011).

\bibitem{ZhouT:2022}
Zhou, H., Tian, W.Y.: Two time-stepping schemes for sub-diffusion equations
  with singular source terms.
\newblock J. Sci. Comput. \textbf{92}(2), Paper No. 70, 28 (2022).

\end{thebibliography}


\end{document}